\newcommand{\redsout}{\bgroup\markoverwith{\textcolor{red}{\rule[0.5ex]{2pt}{.4pt}}}\ULon}
\newcommand{\p}{\partial}
\newcommand{\Om}{\Omega}
\newcommand{\e}{\varepsilon}
\newcommand{\grad}{\nabla}
\newcommand{\Kn}{\mathsf{Kn}}
\numberwithin{equation}{section}
\newtheorem{theorem}{Theorem}[section]
\newtheorem{proposition}[theorem]{Proposition}
\newtheorem{lemma}[theorem]{Lemma}
\newtheorem{remark}{Remark}[section]
\newcommand{\R}{\mathbb R}
\title[ ]{On diffusive scaling in acousto-optic imaging}
\author[Chung]{Francis J. Chung}
\address{Department of Mathematics, University of Kentucky, Lexington, KY 40506, USA}
\email{fj.chung@uky.edu}
\author[Lai]{Ru-Yu Lai}
\address{School of Mathematics, University of Minnesota, Minneapolis, MN 55455, USA}
\curraddr{}
\email{rylai@umn.edu}
\author[Li]{Qin Li}
\address{Department of Mathematics and Wisconsin Institute for Discoveries, University of Wisconsin-Madison, Madison, WI 53705, USA}
\curraddr{}
\email{qinli@math.wisc.edu}
\thanks{Chung is supported in part by Simons Collaboration Grant 582020. Lai is supported in part by the NSF grant DMS-1714490. Li is supported in part by DMS-1619778 and DMS-1750488. The collaboration started during the workshop ``Mathematics in Optical Imaging" in IMA, 2019, and was continuously supported by KI-net, DMS-1107291 in the workshop ``Forward and Inverse Problems in Kinetic Theory" in UW-Madison, 2019. The authors thank Professor John Schotland for helpful discussions.}
\thanks{\textit{Key words and phrases:} Acousto-optic imaging, inverse problem, radiative transfer equation.}
\date{}
\begin{document}
\maketitle

\begin{abstract}
Acousto-optic imaging (AOI) is a hybrid imaging process. By perturbing the to-be-reconstructed tissues with acoustic waves, one introduces the interaction between the acoustic and optical waves, leading to a more stable reconstruction of the optical properties. The mathematical model was described in~\cite{ChungSchotland}, with the radiative transfer equation serving as the forward model for the optical transport. In this paper we investigate the stability of the reconstruction. In particular, we are interested in how the stability depends on the Knudsen number, $\Kn$, a quantity that measures the intensity of the scattering effect of photon particles in a media. Our analysis shows that as $\Kn$ decreases to zero, photons scatter more frequently, and since information is lost, the reconstruction becomes harder. To counter this effect, devices need to be constructed so that laser beam is highly concentrated. We will give a quantitative error bound, and explicitly show that such concentration has an exponential dependence on $\Kn$. Numerical evidence will be provided to verify the proof.
\end{abstract}

\section{Introduction}
High energy light is the classical way to probe optical properties of thick and highly scattering media. In contrast to most destructive experiments, here, high energy photons are injected into biological tissues, and the outgoing light intensities are measured at the surface of the samples by detectors. The map that maps the incoming light intensity to the outgoing light intensity carries optical information of the biological tissues, and is utilized to reconstruct optical parameters.

However, a long standing challenge of the reconstruction process centers around the stability issue. If the injected photons carry low energy, the resulting images are very blurred, and the reconstruction is typically unstable~\cite{Bal09,Bal10a,CLW, Cheng_Gamba_Ren_Doping, LLU2018,Hielscher_rheumatoid2}. This phenomenon is the so-called diffuse optical tomography, especially when the injected light is near-infrared. In this situation, the forward model is the classical elliptic type equation, and mathematically, recovering the optical parameter amounts to reconstructing the diffusion coefficient in the elliptic equation using the Dirichlet-to-Neumann map~\cite{uhlmann2009electrical}, and is proven mathematically to be logarithmically unstable~\cite{A1}. Multiple strategies are adopted to ``stabilize" the problem~\cite{Ren_fPAT,Arridge_couple}, both by adjusting the experimental modalities upfront, or introducing image deblurring techniques as a post-processing~\cite{Chen_2018,jin2019convergence,Arridge_diffusion, Arridge_diffusion_error}.

We follow the first approach in this paper. In particular, we investigate a modality called acousto-optic imaging (AOI), and study the stability of the media reconstruction when biological tissue is optically thick. AOI is one of the  state-of-the-art hybrid imaging processes that fuse two or more imaging modalities to form a new imaging technique \cite{Bal2012review, Kuchment}. In particular, AOI is based on the acousto-optic effect: optical properties of the medium are modified upon interaction with acoustic radiation. Presumably one can obtain more information upon a sequence of such modifications. The procedure combines the contrast advantage inheriting from optical properties and the resolution advantage of ultrasound~ \cite{BalSchotland2010,Lesaffrea,LVWang}, and is expected to provide a more stable reconstruction.

In general, there are two types of AOI - direct imaging and tomographic imaging. In direct imaging, the sample is simultaneously affected by an ultrasound beam and illuminated with a laser source. The tagged photon, resulting from the interaction between the light and the ultrasound, is detected by external ultrasound transducers and then forms the image \cite{Elson2011}. Tomographic imaging, the second type of AOI, makes use of a reconstruction algorithm to track down the optical properties of the medium through various inversion procedures. In the latter case, the inverse problems for incoherent AOI \cite{AmmariBGNS,AmmariGNS, AmmariNS, BalMoskow2014, BalSchotland2010, BalSchotland2014} and for coherent AOI with diffuse light \cite{ChungJS} were studied and the absorption and the diffusion coefficients of a scattering medium are stably reconstructed. On the other hand, for the problem within the framework of radiative transport theory, relevant results are also investigated in \cite{BalChungSchotland, CHS2019, ChungSchotland}; also see reviews~\cite{Arridge99,Arridge_Schotland09,Bal2012review,Kuireview}.

As a typical hybrid imaging problem, the inverse problem in acousto-optic imaging involves two steps: the first step amounts to reconstructing internal information from triggering the system using ultrasound. Ultrasound waves of different frequencies imposed on a physical system induces a wave-like perturbation to the media, leading to wave-like perturbation in the measurement. From measurements of this perturbation over a big spectrum of frequencies, a Fourier-type calculation provides internal information inside the physical domain. The second step then amounts to reconstructing optical parameters in the modeling equation using the internal data. The first step basically is an inverse Fourier series, and thus is regarded as a stable process, and since the internal data is obtained, it is largely believed that the reconstruction is also stable in the second step. 

This is indeed the situation that we will be studying. As a forward model we use the radiative transfer equation (RTE) to characterize the dynamics of photon particles. This equation, besides containing optical parameters, also depends on the Knudsen number $\Kn$: it is the ratio of mean free path and the typical domain length. In the thick and dense optical environment, the photon particles scatter frequently, and mathematically this amounts to setting this ratio to be small. One can also view this quantity to reflect the energy of the photon particles being used: indeed, for the same media, in the low energy regime, photon particles are scattered more often. One interesting phenomenon associated with low energy, high scattering situation is that, in this regime, mathematically one can asymptotically connects the RTE to a parabolic heat equation. It is a well-accepted fact that reconstruction of parameters are unstable for diffusion type equations, and thus it is a natural expectation that the inverse problem of the RTE is ill-posed accordingly in this regime.

The rest of this paper is organized as follows. In Section~\ref{Sec:main}, we discuss the first step in AOI of determining the internal data, introduce preliminary results for the well-posedness of the transport equation, and present the main result in Theorem \ref{thm:main}. Section~\ref{sec:reconstruction} is concerned with the second step in AOI. We derive the reconstruction formula for the absorption coefficient from the reconstructed internal data and focus on the dependence of $\Kn$ in the associated convergence rate. The numerical evidence is presented in Section~\ref{sec:numeric}.

\section{Problem setup and main results}\label{Sec:main}
In this section, we start by introducing some preliminary results and then describe the setup of the problem. Lastly, we present the main theorem. 

To begin, consider a smooth bounded domain $\Omega \subset \mathbb{R}^n$. In this paper we use the RTE as the forward model for light propagation on $\Omega$. We also assume the time scale is significantly larger, and the equation achieves the steady state:
\begin{align}\label{equation RTE}
\left\{\begin{array}{ll}
\theta\cdot\nabla u = \frac{\sigma_s(x)}{\Kn}\left(\langle u\rangle - u\right) -\Kn\sigma_a(x) u& \hbox{in }\Omega\times\mathbb{S}^{n-1}\,,\\
u|_{\Gamma_-}=f\,.\\
\end{array}\right.
\end{align}
The solution $u(x,\theta)$ characterizes the density of photon particles at a physical point $x \in \Omega$ moving in direction $\theta\in\mathbb{S}^{n-1}$ for $n\geq 2$. The left-hand side of \eqref{equation RTE} is a transport term describing particles at the position $x$ moving in the $\theta$ direction. The terms on the right represent the interaction of the photon particles with the medium, where $\sigma_a$ is the absorption coefficient, $\sigma_s$ is the scattering coefficient. The scattering operator is $\langle u\rangle - u$ with
\[
\langle u \rangle := \frac{1}{|\mathbb{S}^{n-1}|}\int_{\mathbb{S}^{n-1}} u(x,\theta)~d\theta =: \fint u(x,\theta)~d\theta
\]
standing for the angular average of $u$. Physically it means the particles with different velocities at $x$ are redistributed in an isotropic way at rate $\sigma_s(x)$. It is also common to insert an anisotropic term $k(x,\theta,\theta')$ in the integral defined above, but it does not bring extra mathematical difficulty and thus will be neglected here. The parameter $\Kn$ in the equation is called the Knudsen number, and as described above, represents the ratio of mean free path to the typical domain length.

For conciseness of notation, we denote the total absorption coefficient
\[
\sigma=\sigma_s+\Kn^2\sigma_a\,,
\]
and rewrite the right-hand side of \eqref{equation RTE} as a linear operator 
\begin{equation}\label{eqn:def_L}
\mathcal{L}u:=\frac{\sigma_s }{\Kn} \langle u\rangle - {\frac{\sigma}{\Kn}} u\,.
\end{equation}
The boundary condition of \eqref{equation RTE} is imposed on $\Gamma_-$: a a set of coordinates at the physical boundary $\partial\Omega$ with inward velocities. More specifically:
\begin{align}\label{gamma+}
\Gamma_{\pm}:=\{(x,\theta)\in \p \Omega\times \R^3: \pm \ n(x)\cdot \theta > 0\}\,, 
\end{align}
where $n(x)$ is the unit outer normal to $\p\Omega$ at the point $x\in \p\Omega$. Similarly, $\Gamma_+$ collects all outgoing coordinates at the boundary.

The well-posedness problem of \eqref{equation RTE} has been extensively studied. We refer the readers to, for instance, \cite{CS2, CS3, CS98} for detailed discussion on the existence of solutions to \eqref{equation RTE} under different constraints. For our purpose, we will be using the following boundedness result, with $\Kn$ included and adjusted to fit our setting.

\begin{theorem}[Theorem 2.1 in \cite{BalChungSchotland}]\label{RTESolutionEstimate} 
Suppose $\sigma_s$ and $\sigma_a$ are bounded and uniformly positive functions in $\Omega$ and the source term $S\in L^{\infty}(\Om \times \mathbb{S}^{n-1})$. For any $\Kn > 0$ and any boundary value $f \in L^{\infty}(\Gamma_-)$, there exists a strong unique solution $u \in L^{\infty}(\Om \times \mathbb{S}^{n-1})$ to the boundary value problem
\begin{align}\label{SourceRTE}
\left\{\begin{array}{ll}
\theta\cdot\nabla u  = {\frac{\sigma_s}{\Kn}}( \langle u\rangle - u) - \Kn \sigma_a u  + S & \hbox{in }\Omega\times \mathbb{S}^{n-1}\,,\\
u  =f & \hbox{on }\Gamma_-\,.\\
\end{array}\right.
\end{align}
Furthermore we have the estimate:
\begin{equation}\label{SolutionEstimate}
\|u\|_{L^{\infty}(\Om \times \mathbb{S}^{n-1})} \leq \frac{C}{\Kn}\left(\|S\|_{L^{\infty}(\Om \times \mathbb{S}^{n-1})}  + \frac{1}{\Kn}\|f\|_{L^{\infty}(\Gamma_-)}\right),
\end{equation}
holds for some positive constant $C$ depending only on $\sigma_a$ and $\sigma_s$. 
\end{theorem}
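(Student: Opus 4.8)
The plan is to recast the boundary value problem \eqref{SourceRTE} as a fixed-point equation in $L^\infty(\Omega\times\mathbb{S}^{n-1})$ by the method of characteristics, and then to invert the resulting integral operator through a Neumann series whose contraction rate is tracked explicitly in $\Kn$. Rewriting the equation as $\theta\cdot\nabla u+\frac{\sigma}{\Kn}u=\frac{\sigma_s}{\Kn}\langle u\rangle+S$ with $\sigma=\sigma_s+\Kn^2\sigma_a$, and integrating this ODE along the straight characteristic through $(x,\theta)$ back to its entry point $x-\tau\theta\in\p\Omega$, where $\tau=\tau(x,\theta)$ is the backward exit time, I would obtain the integral identity $u=\mathcal{K}u+\mathcal{J}$, with
\begin{equation}\label{eqn:Kop}
(\mathcal{K}u)(x,\theta):=\int_0^{\tau}e^{-\frac{1}{\Kn}\int_0^t\sigma(x-s\theta)\,ds}\,\frac{\sigma_s(x-t\theta)}{\Kn}\,\langle u\rangle(x-t\theta)\,dt,
\end{equation}
and $\mathcal{J}$ collecting the boundary and source contributions,
\begin{equation}\label{eqn:Jop}
\mathcal{J}(x,\theta):=e^{-\frac{1}{\Kn}\int_0^{\tau}\sigma(x-s\theta)\,ds}f(x-\tau\theta,\theta)+\int_0^{\tau}e^{-\frac{1}{\Kn}\int_0^t\sigma(x-s\theta)\,ds}S(x-t\theta,\theta)\,dt.
\end{equation}
Since $\langle u\rangle$ depends only on position and $|\langle u\rangle|\le\|u\|_{L^\infty}$, both $\mathcal{K}$ and $\mathcal{J}$ map $L^\infty$ into $L^\infty$. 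Existence and uniqueness then follow from the Banach fixed-point theorem once $\mathcal{K}$ is shown to be a contraction, and the strong-solution claim is recovered by differentiating $u=\mathcal{K}u+\mathcal{J}$ along $\theta\cdot\nabla$.

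The heart of the argument is a contraction estimate for $\mathcal{K}$ with the correct $\Kn$-scaling. Abbreviating $\Sigma(t):=\frac1{\Kn}\int_0^t\sigma(x-s\theta)\,ds$, so that $\Sigma'(t)=\frac{\sigma}{\Kn}(x-t\theta)$ and $\Sigma(0)=0$, I would write $\frac{\sigma_s}{\Kn}=\frac{\sigma_s}{\sigma}\,\Sigma'$ and use the uniform bounds $\underline{\sigma_a}:=\inf\sigma_a>0$ and $\overline{\sigma_s}:=\sup\sigma_s$ to obtain the pointwise inequality $\frac{\sigma_s}{\sigma}=(1+\Kn^2\sigma_a/\sigma_s)^{-1}\le(1+c_0\Kn^2)^{-1}$ with $c_0:=\underline{\sigma_a}/\overline{\sigma_s}>0$. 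This yields
\begin{equation}\label{eqn:contraction}
\int_0^{\tau}e^{-\Sigma(t)}\frac{\sigma_s(x-t\theta)}{\Kn}\,dt\le\frac{1}{1+c_0\Kn^2}\int_0^{\tau}e^{-\Sigma(t)}\Sigma'(t)\,dt=\frac{1-e^{-\Sigma(\tau)}}{1+c_0\Kn^2}\le\frac{1}{1+c_0\Kn^2},
\end{equation}
so that $\|\mathcal{K}\|_{L^\infty\to L^\infty}\le(1+c_0\Kn^2)^{-1}<1$. The crucial observation is that the absorption term $\Kn^2\sigma_a$ inside $\sigma$ produces a subcriticality gap of size $c_0\Kn^2$, which shrinks as $\Kn\to0$; this is precisely the mechanism forcing the inverse operator to blow up in the diffusive regime.

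Finally I would assemble the bound. The contraction estimate gives convergence of the Neumann series together with $\|(I-\mathcal{K})^{-1}\|_{L^\infty\to L^\infty}\le(1+c_0\Kn^2)/(c_0\Kn^2)\le C\Kn^{-2}$ on a bounded range of $\Kn$. For the data term, using $\sigma\ge\sigma_s\ge\underline{\sigma_s}:=\inf\sigma_s>0$ gives $e^{-\Sigma(t)}\le e^{-\underline{\sigma_s}t/\Kn}$, hence $\int_0^{\tau}e^{-\Sigma(t)}\,dt\le\Kn/\underline{\sigma_s}$ and $\|\mathcal{J}\|_{L^\infty}\le\|f\|_{L^\infty(\Gamma_-)}+\frac{\Kn}{\underline{\sigma_s}}\|S\|_{L^\infty}$. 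Combining $u=(I-\mathcal{K})^{-1}\mathcal{J}$ then produces $\|u\|_{L^\infty}\le C\Kn^{-2}\big(\|f\|_{L^\infty(\Gamma_-)}+\Kn\,\|S\|_{L^\infty}\big)$, which is exactly \eqref{SolutionEstimate}. The main obstacle I anticipate is the bookkeeping of $\Kn$ powers in \eqref{eqn:contraction} — extracting the gap $c_0\Kn^2$ from the absorption rather than losing it to the decaying exponential — together with the standard but technical points of exit-time regularity and grazing characteristics needed to justify \eqref{eqn:Kop}--\eqref{eqn:Jop} and the differentiation step.
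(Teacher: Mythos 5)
The paper does not prove this theorem: it is imported, with the $\Kn$ scaling already inserted, from Theorem 2.1 of \cite{BalChungSchotland}, so there is no in-paper argument to compare against. Your proof is correct and follows the standard route for this kind of estimate --- Duhamel integration along characteristics plus a Neumann series, with the essential point being that the contraction gap $c_0\Kn^2$ is extracted from the $\Kn^2\sigma_a$ portion of $\sigma$ (rather than lost to the decaying exponential), which gives $\|(I-\mathcal{K})^{-1}\|_{L^\infty\to L^\infty}\le C\Kn^{-2}$ and, combined with your bound $\|\mathcal{J}\|_{L^\infty}\le \|f\|_{L^\infty(\Gamma_-)}+C\Kn\|S\|_{L^\infty}$, exactly reproduces \eqref{SolutionEstimate}. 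Your caveat that the constant works only on a bounded range of $\Kn$ is not a defect but a genuine feature of the statement: taking $S=0$ and $f\equiv 1$, the solution is close to $1$ near $\Gamma_-$, so \eqref{SolutionEstimate} cannot hold with a $\Kn$-independent $C$ as $\Kn\to\infty$; the estimate is implicitly for the diffusive regime $\Kn\lesssim 1$, which is the only regime the paper uses.
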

In the low energy regime, $\Kn$ is small, and one can perform the asymptotic expansion in $\Kn$. In the leading order the scattering term $\frac{\sigma_s}{ \Kn}( \langle u\rangle - u)$ becomes dominant, and it drives $u\sim\langle u\rangle$, meaning $u$ is a constant in $\theta$ at every location $x$, see~\cite{CLW}. This degeneracy of information, as will be explained in details in Section 3, is one of the main reason for the rise of instability.

With the well-posedness result in Theorem~\ref{RTESolutionEstimate}, the trace can be taken and we define the albedo operator $\mathcal{A}$:
\begin{align}\label{definition albedo}
\mathcal{A}: L^{\infty}(\Gamma_-)\rightarrow L^\infty(\Gamma_+):\quad u|_{\Gamma_+}  =\mathcal{A}f\,.
\end{align}
It maps the incoming light intensity to the outgoing light intensity.

\subsection{Problem setup}
Acousto-optic imaging is an imaging modality that introduces an acoustic wave to the medium, and thus induces a wave-like perturbation to the coefficients $\sigma_a$ and $\sigma_s$, that gives rise to new coefficients $\sigma_{a,\varepsilon}$ and $\sigma_{s,\varepsilon}$. Following the setup presented in \cite{ChungSchotland}, the modulated coefficients take the form
\begin{align}\label{perturbed absorption}
\sigma_{a,\varepsilon} := (1+\varepsilon \cos(q\cdot x))\sigma_a\,,\quad\sigma_{s,\varepsilon} := (1+\varepsilon \cos(q\cdot x))\sigma_s\,,
\end{align}
where $0<\varepsilon\ll 1$ is the strength of introduced acoustic effect. The $\cos{(q\cdot x)}$ perturbation comes from the assumption that the introduced acoustic wave is basically sinusoidal, and the wave vector $q$ can be experimentally adjusted.

We denote $u_\varepsilon$ to be the solution to the RTE with the modulated coefficients, namely:
\begin{align}\label{equation modulated RTE}
\begin{cases}
\theta\cdot\nabla u_{\varepsilon} = \frac{\sigma_{s,\varepsilon}}{\Kn}( \langle u_{\varepsilon} \rangle - u_{\varepsilon}) - \Kn \sigma_{a,\varepsilon} u_{\varepsilon}\,,\\
u_{\varepsilon}|_{\Gamma_-}=f\,.\\
\end{cases}
\end{align}
For small $\varepsilon$, the well-posedness result for \eqref{equation modulated RTE} still holds true according to Theorem~\ref{RTESolutionEstimate}. Hence, for each $q$, we can define the corresponding albedo operator:
\[
    \mathcal{A}_{\varepsilon,q}: f\mapsto u_\varepsilon|_{\Gamma_+}\,,
\]
where $u_\varepsilon$ is the solution to \eqref{equation modulated RTE}. Likewise, we also define the perturbed scattering operator $\mathcal{L}_\varepsilon$:
\[
\mathcal{L}_\varepsilon u :=\frac{\sigma_{s,\varepsilon}}{\Kn} \langle u  \rangle - \frac{\sigma_{\varepsilon}}{\Kn} u \,,
\]
where the modulated total absorption coefficient is
\[
\sigma_\varepsilon:=\sigma_{s,\varepsilon}+\Kn^2\sigma_{a,\varepsilon}\,.
\]

To extract internal information, we first introduce the adjoint RTE:
\begin{align}\label{adjoint equation}
\begin{cases}
-\theta\cdot \nabla v= \mathcal{L}v\,,\\
v|_{\Gamma_+}=g\,.\\
\end{cases} 
\end{align}
It is straightforward to show that if $v$ solves the adjoint RTE, then
\[
\tilde{v}(x,\theta) = v(x,-\theta)
\]
solves the regular RTE. Therefore the well-posedness result applies and we can also define the adjoint albedo operator
\[
\tilde{\mathcal{A}}: L^\infty(\Gamma_+) \rightarrow L^\infty(\Gamma_-): v|_{\Gamma_-} =\tilde{\mathcal{A}}g\,.
\]
Furthermore: 
\[
\widetilde{\mathcal{A}(\tilde{g})}=\tilde{\mathcal{A}}(g)\,,
\]
so $\tilde{\mathcal{A}}$ is determined by $\mathcal{A}$.
Here for a given function $\varphi$, the expression $\tilde{\varphi}$ indicates the reflection of $\varphi$ in the $\theta$ variable, namely, $\tilde{\varphi}(x,\theta):=\varphi(x,-\theta)$.

Now we can use the adjoint RTE to obtain an internal functional. Using ~\eqref{equation modulated RTE} and ~\eqref{adjoint equation} we find
\begin{align}\label{internal id}
  \int_{\partial\Omega\times\mathbb{S}^{n-1}} \theta\cdot n(x) (u_\varepsilon v) ~dxd\theta 
 &= \int_{\Omega}\int_{\mathbb{S}^{n-1}} (v\mathcal{L}_\varepsilon u_\varepsilon  - u_\varepsilon \mathcal{L}v ) ~dxd\theta\, \notag\\
 &=  \int_{\Omega}\int_{\mathbb{S}^{n-1}} v(\mathcal{L}_\varepsilon -\mathcal{L}) u_\varepsilon   ~dxd\theta\,,\notag\\
  &=  \int_{\Omega}\int_{\mathbb{S}^{n-1}} \varepsilon \cos(q\cdot x) (\mathcal{L} u_\varepsilon) v ~dxd\theta\,,
\end{align}
where we used the self-adjoint property of $\mathcal{L}$ and $\mathcal{L}_\varepsilon$ as well as $\mathcal{L}_\varepsilon -\mathcal{L}=\varepsilon \cos(q\cdot x) \mathcal{L}$. 
We can label the left-hand side of this $BT$ (for boundary term) and rewrite it in terms of boundary values of $v$ and $u_{\varepsilon}$:
$$
   BT:=\int_{\Gamma_-} \theta \cdot n f\tilde{\mathcal{A}}(g)~dS + \int_{\Gamma_+} \theta \cdot n \mathcal{A}_{\varepsilon,q}(f)g ~dS\,.
$$
Since the boundary values are known, this implies that $BT$ is known.

Finally, in \eqref{internal id}, we can write $\mathcal{L}u_{\varepsilon} = \mathcal{L}u + \mathcal{L}(u_{\varepsilon}-u)$, so if  $u_\varepsilon - u$ is small when $\varepsilon\to 0$, \eqref{internal id} becomes 
\begin{align}\label{integral identity A_approx}
\varepsilon |\mathbb{S}^{n-1}|\int_{\Omega} \cos(q\cdot x)H(x)~dx  +\mathcal{O}(\varepsilon^2) =BT\,,
\end{align}
where one defines the internal data
$H$ as
\begin{equation}\label{eqn:def_H}
H(x) := \fint_{\mathbb{S}^{n-1}} v\mathcal{L}u  ~d\theta\,.
\end{equation}
This means that in the leading order, with the measurement at the boundary, we can recover the quantity 
\begin{equation*}\label{eqn:BT_leading}
\varepsilon\int_{\Omega} \cos(q\cdot x)H(x)~dx \,.
\end{equation*}
As the reconstruction can be done for all $q\in \R^n$, we obtain the entire spectrum of the Fourier transform of $H$, which is further used to recover $H$.  \footnote{We assume that $\sigma_a,\sigma_s$ are compactly supported in $\Omega$.}

To justify~\eqref{integral identity A_approx}, we use the smallness of $u-u_\varepsilon$ described in the following Lemma:
\begin{lemma}\label{Oepsilon}
For small $\varepsilon$, we have
\[
\|u_{\varepsilon} - u\|_{L^{\infty}(\Omega \times \mathbb{S}^{n-1})} \leq {\mathcal{O}(\varepsilon)}\|u_{\varepsilon}\|_{L^{\infty}(\Om \times \mathbb{S}^{n-1})}\,.
\]
\end{lemma}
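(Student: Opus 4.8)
The plan is to study the difference $w := u_\varepsilon - u$ directly and show it solves a source-type RTE with vanishing inflow data and a source of size $\mathcal{O}(\varepsilon)$, after which Theorem~\ref{RTESolutionEstimate} finishes the job. First I would subtract the two transport equations. Writing $\theta\cdot\nabla u = \mathcal{L}u$ and $\theta\cdot\nabla u_\varepsilon = \mathcal{L}_\varepsilon u_\varepsilon$, and adding and subtracting $\mathcal{L}u_\varepsilon$, I obtain
\[
\theta\cdot\nabla w = \mathcal{L}_\varepsilon u_\varepsilon - \mathcal{L}u = \mathcal{L}w + (\mathcal{L}_\varepsilon - \mathcal{L})u_\varepsilon\,.
\]
Using the identity $\mathcal{L}_\varepsilon - \mathcal{L}=\varepsilon\cos(q\cdot x)\mathcal{L}$ already recorded above, this reads $\theta\cdot\nabla w = \mathcal{L}w + S$ with source $S := \varepsilon\cos(q\cdot x)\,\mathcal{L}u_\varepsilon$. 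Since $u$ and $u_\varepsilon$ share the same inflow data $f$ on $\Gamma_-$, we have $w|_{\Gamma_-}=0$, so $w$ is exactly the solution of the boundary value problem \eqref{SourceRTE} (with the unperturbed coefficients $\sigma_s,\sigma_a$ defining $\mathcal{L}$) driven by source $S$ and zero boundary value.

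Second, I would apply the a priori bound \eqref{SolutionEstimate} to $w$. As the boundary term vanishes, this yields
\[
\|w\|_{L^{\infty}(\Om\times\mathbb{S}^{n-1})} \leq \frac{C}{\Kn}\|S\|_{L^{\infty}(\Om\times\mathbb{S}^{n-1})} = \frac{C\varepsilon}{\Kn}\big\|\cos(q\cdot x)\,\mathcal{L}u_\varepsilon\big\|_{L^{\infty}(\Om\times\mathbb{S}^{n-1})}\,.
\]
It then remains to control $\mathcal{L}u_\varepsilon$ in terms of $\|u_\varepsilon\|_{L^\infty}$. Since $\mathcal{L}u_\varepsilon = \frac{\sigma_s}{\Kn}\langle u_\varepsilon\rangle - \frac{\sigma}{\Kn}u_\varepsilon$ with $|\langle u_\varepsilon\rangle|\leq \|u_\varepsilon\|_{L^\infty}$, the triangle inequality together with the boundedness of $\sigma_s$ and $\sigma=\sigma_s+\Kn^2\sigma_a$ gives $\|\mathcal{L}u_\varepsilon\|_{L^\infty}\leq \frac{C'}{\Kn}\|u_\varepsilon\|_{L^\infty}$. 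Bounding $|\cos(q\cdot x)|\leq 1$ and combining, I arrive at $\|w\|_{L^\infty}\leq \frac{CC'\varepsilon}{\Kn^2}\|u_\varepsilon\|_{L^\infty}$, which is the claimed estimate with the $\Kn$-dependence absorbed into the $\mathcal{O}(\varepsilon)$ constant, the lemma treating $\Kn$ as fixed while $\varepsilon\to 0$.

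I do not anticipate a genuine obstacle, as the statement is essentially a perturbation estimate. The two points that require care are (i) verifying that the source $S$ really lands in $L^\infty$ so that Theorem~\ref{RTESolutionEstimate} applies, which holds because $u_\varepsilon\in L^\infty$ forces $\langle u_\varepsilon\rangle,\ \mathcal{L}u_\varepsilon\in L^\infty$; and (ii) tracking where the factors of $1/\Kn$ accumulate, since the solution estimate and the operator $\mathcal{L}$ each contribute one power of $1/\Kn$, so the hidden constant degrades like $\Kn^{-2}$. This degeneration is harmless for the present lemma but is precisely the effect that the remainder of the paper quantifies.
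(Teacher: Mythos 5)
Your proposal is correct and follows essentially the same route as the paper: subtracting the two equations, writing $\theta\cdot\nabla(u_\varepsilon-u)=\mathcal{L}(u_\varepsilon-u)+\varepsilon\cos(q\cdot x)\mathcal{L}u_\varepsilon$ with vanishing inflow data, and applying the a priori estimate of Theorem~\ref{RTESolutionEstimate} followed by the bound $\|\mathcal{L}u_\varepsilon\|_{L^\infty}\lesssim \Kn^{-1}\|u_\varepsilon\|_{L^\infty}$. Your explicit tracking of the $\Kn^{-2}$ degradation of the hidden constant matches the paper's remark following the lemma.
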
 
\begin{proof}
Since $u$ solves the RTE $\theta \cdot \grad u = \mathcal{L} u$ and $u_{\varepsilon}$ solves the modulated RTE $\theta \cdot \grad u_{\varepsilon} = \mathcal{L}_{\varepsilon} u_{\varepsilon}$ with the same incoming boundary conditions, we can write
	\begin{align*}
		\theta \cdot \grad (u_{\varepsilon} - u) &= \mathcal{L}_{\varepsilon} u_{\varepsilon} - \mathcal{L}u \\
		 &=\mathcal{L}(u_{\varepsilon} - u) +( \mathcal{L}_{\varepsilon} - \mathcal{L})u_{\varepsilon} \\
		&= \mathcal{L}(u_{\varepsilon} - u) +\e \cos(q\cdot x) \mathcal{L}u_{\varepsilon} 
	\end{align*}
	and $u_{\varepsilon} - u = 0$ on $\Gamma_-$.   It follows from Theorem \ref{RTESolutionEstimate} that 
	\[
	\|u_{\varepsilon} - u\|_{L^{\infty}(\Omega \times \mathbb{S}^{n-1})} \leq {\mathcal{O}(\varepsilon)}\|\mathcal{L}u_{\varepsilon}\|_{L^{\infty}(\Om \times \mathbb{S}^{n-1})}\,,
	\]
	and thus from the definition of $\mathcal{L}$ we derive 
	\begin{equation*}
	\|u_{\varepsilon} - u\|_{L^{\infty}(\Omega \times \mathbb{S}^{n-1})} \leq {\mathcal{O}(\varepsilon)}\|u_{\varepsilon}\|_{L^{\infty}(\Om \times \mathbb{S}^{n-1})}\,.
	\end{equation*}
\end{proof}

\begin{remark}
We comment that the proof above may lead to a large constant in the $\mathcal{O}$ notation for small $\Kn$, but this constant will not affect the later proof. Furthermore, the bounds shown above is not sharp in the diffusion limit. In the diffusion limit of $\Kn\to 0$, $u$ and $u_\varepsilon$ approximately satisfy two elliptic equations. By comparing these two elliptic equations, we can obtain the smallness of $u-u_\varepsilon$ in $\varepsilon$, and such smallness will be independent of $\Kn$.
\end{remark}

This immediately leads to the following proposition:
\begin{proposition}\label{def H}
	Suppose that $f, \ g\in L^\infty(\Gamma_-)$. Let $u$ be the solution to the RTE with boundary data $u|_{\Gamma_-}=f$, and $v$ be the solution to the adjoint RTE with boundary data $v|_{\Gamma_+}=g$. 
	Then $\tilde{\mathcal{A}}(g)$ and $\mathcal{A}_{\varepsilon,q}(f)$ uniquely determine the internal functional $H\in L^\infty(\Omega)$ up to $\mathcal{O}(\varepsilon)$ accuracy:
	\begin{align}\label{internal data}
	H(x) = \frac{\sigma_s(x)}{\Kn} \langle u\rangle\langle v \rangle - \frac{\sigma(x)}{\Kn} \langle uv\rangle\,.
	\end{align} 
\end{proposition}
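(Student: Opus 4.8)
The plan is to prove the statement in two stages: first derive the closed form \eqref{internal data} for $H$, and then explain why the boundary operators $\tilde{\mathcal{A}}(g)$ and $\mathcal{A}_{\varepsilon,q}(f)$ pin down this $H$ up to an $\mathcal{O}(\varepsilon)$ error. Since the heavy analytic input, namely the integral identity \eqref{internal id} and the smallness estimate of Lemma \ref{Oepsilon}, is already in place, the proposition is essentially a matter of assembling these ingredients and tracking the order of the error.

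For the formula, I would start from the definition \eqref{eqn:def_H}, $H(x)=\fint_{\mathbb{S}^{n-1}} v\,\mathcal{L}u\,d\theta$, and substitute the expression \eqref{eqn:def_L} for $\mathcal{L}$. The factor $\langle u\rangle$ and the coefficients $\sigma_s(x),\sigma(x)$ are all independent of $\theta$, so they may be pulled outside the angular average. This immediately gives
\[
H(x)=\frac{\sigma_s(x)}{\Kn}\langle u\rangle\,\fint_{\mathbb{S}^{n-1}} v\,d\theta-\frac{\sigma(x)}{\Kn}\fint_{\mathbb{S}^{n-1}} uv\,d\theta=\frac{\sigma_s(x)}{\Kn}\langle u\rangle\langle v\rangle-\frac{\sigma(x)}{\Kn}\langle uv\rangle,
\]
which is \eqref{internal data}. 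The membership $H\in L^\infty(\Omega)$ then follows because $u$ and $v$ are bounded by Theorem \ref{RTESolutionEstimate} and $\sigma_s,\sigma$ are bounded.

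For the recovery, I would use that the boundary term $BT$ is, by construction, assembled entirely from $f$, $g$, $\tilde{\mathcal{A}}(g)$ and $\mathcal{A}_{\varepsilon,q}(f)$, hence known. Splitting $\mathcal{L}u_\varepsilon=\mathcal{L}u+\mathcal{L}(u_\varepsilon-u)$ in \eqref{internal id} and bounding the correction via Lemma \ref{Oepsilon} yields \eqref{integral identity A_approx}, that is, $BT=\varepsilon|\mathbb{S}^{n-1}|\int_\Omega\cos(q\cdot x)H(x)\,dx+\mathcal{O}(\varepsilon^2)$. Dividing by $\varepsilon|\mathbb{S}^{n-1}|$ shows that each cosine moment $\int_\Omega\cos(q\cdot x)H(x)\,dx$ is determined by the boundary data with an error of size $\mathcal{O}(\varepsilon)$. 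Since this holds for every wave vector $q\in\R^n$ and $H$ is compactly supported by the footnote assumption on $\sigma_a,\sigma_s$, these moments furnish the full Fourier data of $H$, from which $H$ is reconstructed by Fourier inversion, again up to $\mathcal{O}(\varepsilon)$.

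The only genuinely delicate point is the propagation of the error through the division by $\varepsilon$: the remainder in \eqref{integral identity A_approx} is $\mathcal{O}(\varepsilon^2)$, whose implied constant (as noted in the preceding Remark) degenerates as $\Kn\to0$, and after dividing by $\varepsilon$ it becomes the claimed $\mathcal{O}(\varepsilon)$ accuracy. I would therefore state the $\mathcal{O}(\varepsilon)$ bound as holding for fixed $\Kn$, deferring the quantitative $\Kn$-dependence to the later sections, and I would take care to justify the final Fourier-inversion step through the compact support of $H$.
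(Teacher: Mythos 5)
Your proposal is correct and follows essentially the same route as the paper: the formula \eqref{internal data} is obtained exactly as the paper does, by inserting \eqref{eqn:def_L} into \eqref{eqn:def_H} and pulling the $\theta$-independent factors out of the angular average, while the $\mathcal{O}(\varepsilon)$ recovery statement is assembled, as in the paper's preceding discussion, from the identity \eqref{internal id}, Lemma \ref{Oepsilon}, and Fourier inversion over all $q$. Your added care about dividing the $\mathcal{O}(\varepsilon^2)$ remainder by $\varepsilon$ and about the $\Kn$-degenerating constant is consistent with the paper's own Remark and only makes the one-line proof in the paper more explicit.
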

This directly comes from inserting the definition of $\mathcal{L}$~\eqref{eqn:def_L} in that of $H$ in~\eqref{eqn:def_H}.

\subsection{Main results}
The main objective of this paper is to analyze the stability of the reconstruction process of AOI. In particular we are interested in the requirement imposed on the probing light when the diffusion effect is strong. More specifically, we consider the incoming light $f$ that is concentrated in angle:

\begin{equation*}
f (x,\theta):=\left\{\begin{array}{ll} 
   c_nh^{\frac{1}{2} (1-n)}  & \hbox{if }|\theta-\theta_0|<h\,,\\
   0 & \hbox{otherwise}\,,\\
   \end{array}\right.
\end{equation*}
in $\Omega\times\mathbb{S}^{n-1}$, where $h$ stands for the concentration of the light beam, and $c_n$ is chosen to be the constant depending only on $n$ such that 
\begin{align}\label{identity 1}
    \fint_{\mathbb{S}^{n-1}} f^2 d\theta =1\,.
\end{align}

With concentrated light beam injected into the tissue, singularities are more likely to be preserved, keeping more information for the reconstruction. As $\Kn$ decreases, media becomes optically thicker, and the light beam needs to be more concentrated for a stable reconstruction. We will carefully trace such dependence of $h$ on $\Kn$.
 
The main theorem we will prove is the following, and we leave the proof to Section~\ref{sec:reconstruction}.

\begin{theorem}\label{thm:main}
Let $H$ be the internal functional defined in Proposition~\ref{def H}. Suppose that $f$ satisfies \eqref{eqn:boundary}. If $\sigma \in C^{\alpha}(\Om)$, for some $0<\alpha \leq 1$, then for all $x \in \Om$ one has
\begin{equation}\label{PreSigmaFormula}
\frac{\sigma(x)}{\Kn} = -\frac{f(x - \tau_-(x,\theta_0)\theta_0,\theta_0)}{\mathcal{A}(f)(x + \tau_+(x,\theta_0)\theta_0,\theta_0)} H(x) + \mathcal{O}(h^{\alpha})e^{\frac{3d \sup \sigma}{\Kn}}\,,
\end{equation}
where $\tau_{\pm}$ represent the distance from $x$ to $\partial\Omega$ in the direction $\pm \theta$, and $d$ is the diameter of the domain $\Omega$. The constant implied in the big $\mathcal{O}$ is independent of $\Kn$. Moreover, if one chooses
\begin{equation}\label{HCondition}
h \ll e^{\frac{-3d \sup \sigma}{\alpha \Kn}}\,,
\end{equation}
then 
\begin{equation}\label{SigmaFormula}
\frac{\sigma(x)}{\Kn} = -\frac{f(x - \tau_-(x,\theta_0)\theta_0,\theta_0)}{\mathcal{A}(f)(x + \tau_+(x,\theta_0)\theta_0,\theta_0)}  H(x) + \mathcal{O}(h^{\alpha'})\,,
\end{equation}
for some $0 < \alpha' < \alpha$. 
\end{theorem}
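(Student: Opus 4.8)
The plan is to probe the medium with two beams sharply concentrated about the same direction $\theta_0$: the prescribed incoming beam $f$ on $\Gamma_-$, together with an outgoing beam $g$ on $\Gamma_+$ carrying the identical angular profile, normalized so that $\fint_{\mathbb{S}^{n-1}} fg\,d\theta=\fint_{\mathbb{S}^{n-1}} f^2\,d\theta=1$. I would split both transport solutions into ballistic and scattering parts, $u=u^b+u^s$ and $v=v^b+v^s$, where $u^b,v^b$ solve the pure-attenuation equations (dropping the gain term $\tfrac{\sigma_s}{\Kn}\langle\cdot\rangle$) with the same boundary data, while $u^s,v^s$ carry zero incoming/outgoing data and are driven by the scattering source $\tfrac{\sigma_s}{\Kn}\langle u^b\rangle$. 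The heart of the argument is that, as the beam sharpens ($h\to0$), the internal functional $H$ in \eqref{internal data} is governed entirely by the ballistic--ballistic interaction $\langle u^b v^b\rangle$, which can be written explicitly through attenuation along characteristics.

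For the ballistic pieces, integration along the characteristic through $x$ gives
\begin{align*}
u^b(x,\theta)=f(x-\tau_-\theta,\theta)\,E_-(x,\theta),\qquad v^b(x,\theta)=g(x+\tau_+\theta,\theta)\,E_+(x,\theta),
\end{align*}
with $E_\mp(x,\theta)=\exp\big(-\tfrac1{\Kn}\int_0^{\tau_\mp}\sigma(x\mp s\theta)\,ds\big)$, so that $E_-E_+$ is the total attenuation over the full chord. Both $f(x-\tau_-\theta,\cdot)$ and $g(x+\tau_+\theta,\cdot)$ are supported in the cap $|\theta-\theta_0|<h$ and equal $c_n h^{(1-n)/2}$ there. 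Freezing the attenuation factor at $\theta=\theta_0$ and invoking the normalization yields
\begin{align*}
\langle u^b v^b\rangle\;\approx\;E_-(x,\theta_0)E_+(x,\theta_0)\,\fint_{\mathbb{S}^{n-1}} fg\,d\theta\;=\;E_-(x,\theta_0)E_+(x,\theta_0).
\end{align*}
Since for small $h$ the outgoing trace is ballistic to leading order, $\mathcal{A}(f)(x+\tau_+\theta_0,\theta_0)\approx f(x-\tau_-\theta_0,\theta_0)\,E_-(x,\theta_0)E_+(x,\theta_0)$, whence $E_-(x,\theta_0)E_+(x,\theta_0)\approx \mathcal{A}(f)(x+\tau_+\theta_0,\theta_0)/f(x-\tau_-\theta_0,\theta_0)$. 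Substituting into $H\approx-\tfrac{\sigma}{\Kn}\langle u^bv^b\rangle$ and solving for $\sigma/\Kn$ reproduces the leading term of \eqref{PreSigmaFormula}.

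It remains to quantify three error sources and track their amplification. First, by Theorem~\ref{RTESolutionEstimate} applied to the zero-data problems for $u^s,v^s$ with source of size $\mathcal{O}(h^{(n-1)/2})$, one gets $\|u^s\|_\infty,\|v^s\|_\infty=\mathcal{O}(h^{(n-1)/2}/\Kn^{k})$ for a fixed power $k$; since $u^b,v^b$ live on a cap of measure $\sim h^{n-1}$, every term of $\langle uv\rangle$ other than $\langle u^bv^b\rangle$, as well as the product $\langle u\rangle\langle v\rangle$, is $\mathcal{O}(h^{n-1}/\Kn^{k})$ and hence subdominant to $h^\alpha$ because $n-1\ge1\ge\alpha$. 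Second, freezing the attenuation at $\theta_0$ costs a factor $\exp(\mathcal{O}(h^\alpha/\Kn))$: the chords in directions $\theta$ and $\theta_0$ stay within $\mathcal{O}(h)$ of each other over length $\le d$, so the Hölder bound on $\sigma$ gives an additive error $\mathcal{O}(h^\alpha/\Kn)$ in $\langle u^bv^b\rangle$. Finally, inverting for $\sigma/\Kn$ multiplies all of these by the amplification $f/\mathcal{A}(f)\approx(E_-E_+)^{-1}\le e^{d\sup\sigma/\Kn}$. The crucial bookkeeping is that $\tfrac1{\Kn^{k}}e^{d\sup\sigma/\Kn}\le C\,e^{3d\sup\sigma/\Kn}$ uniformly in $\Kn>0$, so the polynomial $\Kn$-losses from the solution estimates and from the Hölder step are absorbed into a single exponential, leaving a constant independent of $\Kn$ and establishing \eqref{PreSigmaFormula}.

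For the clean bound \eqref{SigmaFormula} I would rewrite $h^\alpha e^{3d\sup\sigma/\Kn}=h^{\alpha'}\big(h^{\alpha-\alpha'}e^{3d\sup\sigma/\Kn}\big)$ for a chosen $0<\alpha'<\alpha$; hypothesis \eqref{HCondition} forces $\log(1/h)\gg\tfrac{3d\sup\sigma}{\alpha\Kn}$, so for $\alpha'$ small enough the bracketed factor is $\le1$ and the error collapses to $\mathcal{O}(h^{\alpha'})$. I expect the main obstacle to be the uniform-in-$\Kn$ control of the scattering remainders $u^s,v^s$ together with the exponential amplification: one must verify that the repeated $1/\Kn$ losses from Theorem~\ref{RTESolutionEstimate} and from differentiating the attenuation are genuinely dominated by $e^{3d\sup\sigma/\Kn}$, which is precisely what fixes the constant $3$ in the exponent.
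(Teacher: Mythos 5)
Your proposal is correct and follows essentially the same route as the paper's own proof: your ballistic/scattering splitting is exactly the decomposition \eqref{equation_ballistic_remainder}--\eqref{equation_ballistic_v}, your freezing-at-$\theta_0$ step with H\"older error $\mathcal{O}(h^{\alpha}/\Kn)$ is Proposition~\ref{PrincipalEstimate}, your $\mathcal{O}(h^{n-1}\Kn^{-k})$ bounds on all other terms are Proposition~\ref{ReminderEstimate}, and the replacement of the attenuation factor by the measured ratio $\mathcal{A}(f)/f$ as in \eqref{TotalAttenuation}, the absorption of polynomial $\Kn^{-1}$ losses into the factor $e^{3d\sup\sigma/\Kn}$, and the final interpolation $h^{\alpha}e^{3d\sup\sigma/\Kn}=h^{\alpha'}\bigl(h^{\alpha-\alpha'}e^{3d\sup\sigma/\Kn}\bigr)$ under \eqref{HCondition} all coincide with the paper's argument. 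No genuinely different ideas or gaps to report.
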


We make the following five remarks regarding the interpretation of Theorem \ref{thm:main}.

First, on explicit reconstruction: we note, as in \cite{ChungSchotland}, that everything on the right side of \eqref{SigmaFormula}, other than the error term, is explicit: can be directly calculated from boundary measurements.  Thus \eqref{SigmaFormula} can be viewed as a reconstruction formula for $\sigma$ in terms of boundary quantities.

Second, in the case $\Kn \rightarrow 0$, we have $\Kn^2 \sigma_a \rightarrow 0$ in the total absorption coefficient $\sigma$.  Therefore it makes sense in this case to consider the reconstruction of one coefficient by \eqref{SigmaFormula}, and not attempt to recover $\sigma_a$ and $\sigma_s$ separately as in \cite{ChungSchotland}.

Third, one preliminary assumption of $\sigma$: the theorem does require H\"older continuity. This is different from what is required in~\cite{ChungSchotland} where H\"older continuity is removed. The main difference is we are establishing a \emph{rate of convergence} for the formula in terms of the angular concentration of the source, and thus some control on the modulus of continuity is needed (see the proof of Proposition \ref{PrincipalEstimate} for details.)  Heuristically, it should be reasonable to concede that a sufficiently rough coefficient cannot be well-resolved with any source that is not completely singular.

Fourth, on incoming data concentration: in the case $\Kn \ll 1$, \eqref{HCondition} gives us a condition to guarantee that \eqref{PreSigmaFormula} can be used to reconstruct $\sigma$.  In other words, in the case $\Kn \ll 1$, Theorem \ref{thm:main} gives a quantitative condition on the degree of angular concentration of the source required for tomography to work at all, with possible implications for the RTE-based optical tomography more generally. 

Finally, on stability: this theorem also implies a quantitative stability estimate in terms of $\Kn$ for the acousto-optic reconstruction. In Section 3, we will show that $\frac{f(x-\tau_-)}{\mathcal{A}f(x+\tau_+)}  \sim e^{\frac{c}{\Kn}}$; this means the perturbation in $H$ will be reflected exponentially in the reconstruction of $\sigma$. Applying~\eqref{OldConclusion}, this means that in the regime where \eqref{SigmaFormula} applies, we have
\[
\frac{1}{\Kn}|\sigma_1(x) - \sigma_2(x)| \leq e^{\frac{d\sup \sigma_j}{\Kn}} |H_1(x) - H_2(x)| + \mathcal{O}(h^{\alpha})\,.
\] 
This shows that the stability of the reconstruction decays exponentially as $\Kn \rightarrow 0$, which should be expected from the reliance on singularities in the RTE (see also \cite{LLU2018, ZhaoZ18,LiYangZhong}).

\section{Reconstruction of optical properties}\label{sec:reconstruction}
We show Theorem~\ref{thm:main} in this section. The proof relies on a proper decomposition of the transport solution according to the singularity. This technique, termed the singular decomposition, is the classical technique in the study of optics in inverse problems.

This was introduced in \cite{AniProKov, Bondarenko, CS1, CS2,CS3} and later was applied extensively to treat various settings related to parameter identification of the RTE \cite{Bal18, BalMonard_time_harmonic,CS98, Kuireview, Stefanov_2003,   SU2d,  Wang1999}. For the purpose of this paper, we will decompose the solution to the RTE into two parts - the most singular (ballistic part) and the remainders, and then separate the singular contribution from the rest of $H(x)$.

To start, we first write the solution $u\in L^\infty(\Omega\times\mathbb{S}^{n-1})$ of the equation \eqref{equation RTE} in the form of
\[
u=u_1+u_2\,,
\]
where $u_1$ is the ballistic part that satisfies the X-ray equation and $u_2$ takes care of the remaining terms, namely:
\begin{align}\label{equation_ballistic_remainder}
\begin{cases}
\theta\cdot\nabla u_1=-\frac{\sigma}{\Kn} u_1\,,\\
u_1|_{\Gamma_-}=f\,,\\
\end{cases} \quad\text{and}\hskip1cm
\begin{cases}
\theta\cdot\nabla u_2 = \frac{\sigma_s}{\Kn}  \langle u_2\rangle - \frac{\sigma}{\Kn} u_2 +\frac{\sigma_s}{\Kn} \langle u_1\rangle\,,\\
u_2|_{\Gamma_-}= 0\,.\\
\end{cases}  
\end{align}
The existence of solutions to both equations {follows from Theorem \ref{RTESolutionEstimate}}. Note that the entire boundary condition $f$ is applied on $u_1$ leaving $u_2$ with the trivial boundary condition.

A similar separation can also be performed to the adjoint RTE~\eqref{adjoint equation}. We write $v=v_1+v_2$ with the components satisfying:
\begin{align}\label{equation_ballistic_v}
\begin{cases}
-\theta\cdot\nabla v_1=-\frac{\sigma}{\Kn} v_1\,,\\
v_1|_{\Gamma_+}=g\,,\\
\end{cases} \quad\text{and}\hskip1cm
\begin{cases}
-\theta\cdot\nabla v_2 = \frac{\sigma_s }{\Kn}  \langle v_2\rangle - \frac{\sigma}{\Kn}v_2+\frac{\sigma_s}{\Kn} \langle v_1\rangle\,,\\
v_2|_{\Gamma_+}= 0\,.
\end{cases} 
\end{align}	

With these decompositions, we now rewrite the internal function $H(x)$ as:
\begin{align}\label{internal_data_expand}
H(x) 
&= \frac{\sigma_s(x)}{\Kn} (\langle u_1\rangle\langle v_1\rangle+\langle u_1\rangle\langle v_2\rangle+\langle u_2\rangle\langle v_1\rangle+\langle u_2\rangle\langle v_2\rangle) \notag\\
&\hskip1cm  -\frac{\sigma(x)}{\Kn}(\langle u_1 v_1\rangle + \langle u_1 v_2\rangle + \langle u_2 v_1\rangle + \langle u_2 v_2\rangle)\,,
\end{align}
simply by substituting $u=u_1+u_2$ and $v=v_1+v_2$ into the definition of $H$~\eqref{internal data}.

As described previously, boundary conditions are light beams concentrated in the angle variable $\theta_0\in \mathbb{S}^{n-1}$: 
\begin{equation}\label{eqn:boundary}
   g (x,\theta) = f (x,\theta)
   :=\left\{\begin{array}{ll} 
   c_nh^{\frac{1}{2} (1-n)}  & \hbox{if }|\theta-\theta_0|<h\,,\\
   0 & \hbox{otherwise}\,,\\
   \end{array}\right.
\end{equation}
in $\Omega\times\mathbb{S}^{n-1}$, where $c_n$ is the normalization constant.

In the rest of the section we will show that with such concentrated incoming data \eqref{eqn:boundary} and sufficiently small $h$, the term $\langle u_1 v_1\rangle$ essentially dominates the other terms in $H(x)$. This can be seen from the estimates of the dominating and remaining terms presented in Proposition~\ref{PrincipalEstimate} and Proposition~\ref{ReminderEstimate}, respectively.
Finally, from the explicit dependence of $\langle u_1 v_1\rangle$ on $\sigma$, we can derive the reconstruction of $\sigma$ from the internal data.

\begin{proposition}\label{PrincipalEstimate} 
Suppose $\sigma\in C^{\alpha}(\Om)$, and let $u_i$ and $v_i$, $i=1,2$, satisfy~\eqref{equation_ballistic_remainder} and~\eqref{equation_ballistic_v} respectively with boundary condition defined in~\eqref{eqn:boundary}. Then for each $x$ in $\Omega$, one has
\[
\langle u_1 v_1 \rangle(x) = e^{-\int_{\ell(x,\theta_0)}\frac{\sigma}{\Kn}ds}(1 + \Kn^{-1}\mathcal{O}(h^{\alpha}))\,,
\]
where $\ell(x,\theta)$ denotes the section of the straight line through $x$ in the direction of $\theta$, intersected with $\Omega$.
\end{proposition}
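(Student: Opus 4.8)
The plan is to solve the two ballistic equations in \eqref{equation_ballistic_remainder} and \eqref{equation_ballistic_v} explicitly by the method of characteristics, and then reduce the angular average $\langle u_1 v_1\rangle$ to an integral over the small angular cap where the source is supported. Since $u_1$ and $v_1$ obey pure-absorption transport equations with no scattering coupling, integrating $u_1$ backward along the straight characteristic through $x$ in direction $\theta$ to its entry point on $\Gamma_-$, and integrating the adjoint $v_1$ forward to its exit point on $\Gamma_+$, gives
\[
u_1(x,\theta)=f\bigl(x-\tau_-(x,\theta)\theta,\theta\bigr)\,e^{-\frac{1}{\Kn}\int_0^{\tau_-(x,\theta)}\sigma(x-s\theta)\,ds},\qquad
v_1(x,\theta)=g\bigl(x+\tau_+(x,\theta)\theta,\theta\bigr)\,e^{-\frac{1}{\Kn}\int_0^{\tau_+(x,\theta)}\sigma(x+s\theta)\,ds}.
\]
Multiplying these and combining the two line integrals over $[-\tau_-,0]$ and $[0,\tau_+]$ into a single integral over the chord $\ell(x,\theta)$ yields
\[
u_1 v_1(x,\theta)=f\bigl(x-\tau_-\theta,\theta\bigr)\,g\bigl(x+\tau_+\theta,\theta\bigr)\,e^{-\frac{1}{\Kn}\int_{\ell(x,\theta)}\sigma\,ds}.
\]

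Because the source \eqref{eqn:boundary} depends only on $\theta$ and $g=f$, the product $f(\cdot,\theta)g(\cdot,\theta)$ equals $c_n^2 h^{1-n}$ exactly on the cap $\{|\theta-\theta_0|<h\}$ and vanishes elsewhere, so that
\[
\langle u_1 v_1\rangle(x)=\frac{c_n^2 h^{1-n}}{|\mathbb{S}^{n-1}|}\int_{|\theta-\theta_0|<h}e^{-\frac{1}{\Kn}\int_{\ell(x,\theta)}\sigma\,ds}\,d\theta.
\]
The normalization \eqref{identity 1} is precisely the statement that $c_n^2 h^{1-n}|\{|\theta-\theta_0|<h\}|/|\mathbb{S}^{n-1}|=1$, so the prefactor is the reciprocal of the cap measure. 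I would then factor out the value of the exponent at $\theta=\theta_0$, writing the integrand as $e^{-\int_{\ell(x,\theta_0)}\sigma/\Kn}$ times the correction factor $e^{-\frac{1}{\Kn}(\int_{\ell(x,\theta)}\sigma-\int_{\ell(x,\theta_0)}\sigma)}$, reducing the claim to showing that the cap-average of this correction factor is $1+\Kn^{-1}\mathcal{O}(h^{\alpha})$.

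The heart of the argument, and the step I expect to be the main obstacle, is the estimate
\[
\Bigl|\int_{\ell(x,\theta)}\sigma\,ds-\int_{\ell(x,\theta_0)}\sigma\,ds\Bigr|\le C h^{\alpha}\qquad\text{for }|\theta-\theta_0|<h.
\]
This splits into two contributions. On the common parameter range, $|\sigma(x+s\theta)-\sigma(x+s\theta_0)|\le[\sigma]_\alpha(|s|\,|\theta-\theta_0|)^{\alpha}\le[\sigma]_\alpha(dh)^{\alpha}$ by H\"older continuity, which integrates to an $\mathcal{O}(h^{\alpha})$ term; this is exactly where the $C^{\alpha}$ hypothesis is used and explains its presence in the statement. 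The second contribution comes from the mismatch of the endpoints $\tau_\pm(x,\theta)$ against $\tau_\pm(x,\theta_0)$, where I would invoke the smoothness of $\partial\Omega$ together with the transversality of the chord $\ell(x,\theta_0)$ at a fixed interior $x$ to get $|\tau_\pm(x,\theta)-\tau_\pm(x,\theta_0)|=\mathcal{O}(h)$, so the mismatched end segments contribute $(\sup\sigma)\,\mathcal{O}(h)=\mathcal{O}(h^{\alpha})$ since $\alpha\le 1$. Controlling this endpoint term uniformly is the delicate point, as it is sensitive to grazing directions; I would confine the statement to a fixed $x$ and non-tangential $\theta_0$, allowing the constant to depend on $x$, $\theta_0$, and $\Omega$ but not on $\Kn$.

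Finally I would divide the $\mathcal{O}(h^{\alpha})$ bound by $\Kn$ in the exponent and expand $e^{z}=1+\mathcal{O}(z)$ to turn the correction factor into $1+\Kn^{-1}\mathcal{O}(h^{\alpha})$ uniformly over the cap; averaging over the cap and using that the prefactor times the cap measure equals $1$ then gives $\langle u_1 v_1\rangle(x)=e^{-\int_{\ell(x,\theta_0)}\sigma/\Kn}\bigl(1+\Kn^{-1}\mathcal{O}(h^{\alpha})\bigr)$, as claimed. The expansion is legitimate because the operating regime \eqref{HCondition} forces $h^{\alpha}/\Kn$ to be small, and all constants hidden in $\mathcal{O}(h^{\alpha})$ depend on $[\sigma]_\alpha$, $\sup\sigma$, $x$, $\theta_0$, and $\Omega$, but remain independent of $\Kn$.
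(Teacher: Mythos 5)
Your proposal follows the paper's proof almost step for step: the explicit characteristic formulas \eqref{BallisticExplicit}--\eqref{BallisticExplicit v}, the reduction of $\langle u_1 v_1\rangle$ to an average over the angular cap using \eqref{identity 1}, the factoring out of $e^{-\frac{1}{\Kn}\int_{\ell(x,\theta_0)}\sigma\, ds}$, and the H\"older estimate $\bigl|\int_{\ell(x,\theta)}\sigma\, ds - \int_{\ell(x,\theta_0)}\sigma\, ds\bigr| = \mathcal{O}(h^{\alpha})$ are all exactly the paper's ingredients. There is, however, one genuine gap in your final step. You justify the expansion $e^{z}=1+\mathcal{O}(z)$ by invoking the operating regime \eqref{HCondition}, but the proposition carries no such hypothesis, and it cannot: it is used to derive \eqref{PreSigmaFormula}, which holds \emph{before} \eqref{HCondition} is imposed (that condition enters only afterwards, to pass from \eqref{PreSigmaFormula} to \eqref{SigmaFormula}). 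Since the error term $\Kn^{-1}\mathcal{O}(h^{\alpha})$ in the proposition is not claimed to be small, the estimate must hold unconditionally. The paper handles this with a two-case argument: it bounds $|e^{-\mathcal{O}(h^{\alpha})/\Kn}-1|$ by the Taylor remainder when $h^{\alpha}/\Kn$ is small, and crudely by $1$ otherwise, which is admissible because in that regime $1 \lesssim h^{\alpha}/\Kn$, so the bound $\Kn^{-1}\mathcal{O}(h^{\alpha})$ persists. Your argument as written proves only the conditional statement; the fix is this one-line case distinction, not an appeal to \eqref{HCondition}.

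A second, smaller divergence: your treatment of the endpoint mismatch $|\tau_{\pm}(x,\theta)-\tau_{\pm}(x,\theta_0)|$ via transversality is more explicit than anything in the paper, but as you yourself flag, it degenerates at grazing directions and yields constants depending on $x$ and $\theta_0$, whereas Theorem \ref{thm:main} asserts the formula for all $x\in\Omega$ with a uniform implied constant. The paper sidesteps the endpoint issue entirely through the standing assumption (footnote in Section 2) that $\sigma_a,\sigma_s$ are compactly supported in $\Omega$: extending $\sigma$ by zero keeps it in $C^{\alpha}$, the chord integrals become integrals over full lines of length at most $d$, and the difference is bounded by $2d\,[\sigma]_{\alpha}(dh)^{\alpha}$ with no boundary contribution, uniformly in $x$ and $\theta_0$. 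Replacing your transversality argument with this observation removes the delicate point you identified and restores the uniformity the theorem needs.
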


\begin{proof}
The ballistic terms of the solution $u$ to the RTE and the solution $v$ to the adjoint RTE are given explicitly by
\begin{align}\label{BallisticExplicit}
    u_1(x,\theta) =e^{-\int_0^{\tau_-(x,\theta)}{\frac{1}{\Kn}}\sigma(x-s\theta)ds} f(x-\tau_-(x,\theta)\theta,\theta)\,, 
\end{align}
\begin{align}\label{BallisticExplicit v}
    v_1(x,\theta) = e^{-\int_0^{\tau_+(x,\theta)}{\frac{1}{\Kn}}\sigma(x+s\theta)ds} g (x+\tau_+(x,\theta)\theta,\theta)\,,
\end{align}
where $\tau_\pm(x,\theta)$ is the distance from the point $x$ to the boundary in the direction $\pm\theta$. 
 
Substituting these into $\langle u_1 v_1 \rangle(x)$ and using \eqref{eqn:boundary}, we find that for any $x\in \Omega$,
\[
\langle u_1 v_1 \rangle(x) 
= \fint_{\mathbb{S}^{n-1}} e^{-\int_{-\tau_-(x,\theta)}^{\tau_+(x,\theta)}{\frac{1}{\Kn}}\sigma(x+s\theta)ds} c_n^2 h^{1-n}\chi^h_{\theta_0}(\theta) d\theta\,,
\]
where $\chi^h_{\theta_0}(\theta)$ is the characteristic function that is equal to $1$ in $|\theta-\theta_0|<h$ and equal to $0$ otherwise.
The integral in the exponent is precisely the integral over $\ell(x,\theta)$, so we can rewrite this as
\[
\langle u_1 v_1 \rangle(x) 
= c_n^2 h^{1-n}\fint_{\mathbb{S}^{n-1}} e^{-{\frac{1}{ \Kn}}\int_{\ell(x,\theta)}\sigma ds} \chi^h_{\theta_0}(\theta ) d\theta\,.
\]
From \eqref{identity 1} for small $h$, we have
\begin{equation}\label{LebesgueError}
\langle u_1 v_1 \rangle(x) 
= e^{-\frac{1}{\Kn} \int_{\ell(x,\theta_0)}\sigma ds} + R\,,
\end{equation}
where $R$ is the error term
\begin{align*}
R &= \fint_{\mathbb{S}^{n-1}} (e^{-\frac{1}{\Kn} \int_{\ell(x,\theta)}\sigma ds} - e^{-\frac{1}{\Kn} \int_{\ell(x,\theta_0)}\sigma ds})c_n^2 h^{1-n}\chi^h_{\theta_0}(\theta ) d\theta \\
  &=  e^{-\frac{1}{\Kn} \int_{\ell(x,\theta_0)}\sigma ds}\fint_{\mathbb{S}^{n-1}} (e^{-\frac{1}{\Kn} (\int_{\ell(x,\theta)}\sigma ds - \int_{\ell(x,\theta_0)} \sigma ds)} - 1)c_n^2 h^{1-n}\chi^h_{\theta_0}(\theta ) d\theta\,. 
\end{align*}
Since $\sigma \in C^{\alpha}(\Om)$, we have for sufficiently small $h$
\[
R = e^{-\frac{1}{\Kn} \int_{\ell(x,\theta_0)}\sigma ds}\fint_{\mathbb{S}^{n-1}} (e^{-\frac{\mathcal{O}(h^{\alpha})}{\Kn} } - 1)c_n^2 h^{1-n}\chi^h_{\theta_0}(\theta ) d\theta\,. 
\]  
Note that the constant in the $\mathcal{O}$ notation merely depends on the regularity of $\sigma$. We can estimate $|e^{-\frac{\mathcal{O}(h^{\alpha})}{\Kn} } - 1|$ by the Taylor remainder formula if $\frac{\mathcal{O}(h^{\alpha})}{\Kn}$ is small, or by $1$ otherwise. In either case, we get 
\[
R = e^{-\frac{1}{\Kn} \int_{\ell(x,\theta_0)}\sigma ds}\frac{\mathcal{O}(h^{\alpha})}{\Kn}\fint_{\mathbb{S}^{n-1}}c_n^2 h^{1-n}\chi^h_{\theta_0}(\theta )d \theta\,.
\]
Then \eqref{identity 1} implies that
\[
R = \frac{1}{\Kn}e^{-\frac{1}{\Kn} \int_{\ell(x,\theta_0)}\sigma ds}\mathcal{O}(h^{\alpha})\,.
\]
Returning to \eqref{LebesgueError}, we conclude that
\[
\langle u_1 v_1 \rangle(x) 
= e^{-\frac{1}{\Kn}\int_{\ell(x,\theta_0)}\sigma ds} (1 + \Kn^{-1}\mathcal{O}(h^{\alpha}))
\]
as desired.
\end{proof}

The following proposition justifies the smallness of the remainder terms. 
\begin{proposition}\label{ReminderEstimate}
Under the same conditions as in Proposition~\ref{PrincipalEstimate}, suppose $\Kn \leq 1$, there exists a constant $C$ independent of $h$ and $\Kn$ such that
\[
\langle u_j \rangle \langle v_k \rangle (x)\leq C \Kn^{-4}h^{n-1}
\]
for $j,k = 1,2$, and
\[
\langle u_j v_k \rangle (x)\leq C\Kn^{-4}h^{n-1} 
\]
for $j,k = 1,2$ and $j+k > 2$ for each $x$ in $\Omega$.
\end{proposition}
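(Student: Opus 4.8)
The plan is to bound each factor in the products by combining the explicit pointwise formulas \eqref{BallisticExplicit}--\eqref{BallisticExplicit v} for the ballistic parts $u_1,v_1$ with the boundedness estimate \eqref{SolutionEstimate} of Theorem~\ref{RTESolutionEstimate} applied to the remainder parts $u_2,v_2$. First I would record the size of the concentrated data. Since the spherical cap $\{|\theta-\theta_0|<h\}$ has measure $\sim h^{n-1}$, the normalization \eqref{identity 1} forces $c_n=\mathcal{O}(1)$ and $\|f\|_{L^\infty}=c_n h^{(1-n)/2}$. Because the exponential factor in \eqref{BallisticExplicit} is at most $1$, I get $\|u_1\|_{L^\infty},\|v_1\|_{L^\infty}\le\|f\|_{L^\infty}\sim h^{(1-n)/2}$, and, crucially, $u_1,v_1$ are supported only on the cap. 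Integrating over the cap yields the angular-average bound $\langle u_1\rangle,\langle v_1\rangle\lesssim h^{(1-n)/2}\cdot h^{n-1}=h^{(n-1)/2}$; this extra factor $h^{(n-1)/2}$ is precisely the gain supplied by the angular concentration.

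Next I would estimate the remainder parts. The equation for $u_2$ in \eqref{equation_ballistic_remainder} is an RTE with zero incoming data and source $S=\frac{\sigma_s}{\Kn}\langle u_1\rangle$, whose norm is $\|S\|_{L^\infty}\lesssim\Kn^{-1}h^{(n-1)/2}$ by the previous step. Applying \eqref{SolutionEstimate} with $f=0$ gives $\|u_2\|_{L^\infty}\le\frac{C}{\Kn}\|S\|_{L^\infty}\lesssim\Kn^{-2}h^{(n-1)/2}$, and the same holds for $v_2$. The key structural observation is that the source $\langle u_1\rangle$ is isotropic, so $u_2$ is \emph{not} concentrated in angle; for the remainder factors I therefore only have the trivial average bound $\langle u_2\rangle,\langle v_2\rangle\le\|u_2\|_{L^\infty},\|v_2\|_{L^\infty}\lesssim\Kn^{-2}h^{(n-1)/2}$, with no further gain from concentration.

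Finally I would assemble the products, using $\Kn\le 1$ to absorb every missing power of $\Kn^{-2}$ into $\Kn^{-4}$. For $\langle u_j\rangle\langle v_k\rangle$ I simply multiply the two average bounds: the cases $(1,1),(1,2),(2,2)$ give $h^{n-1},\Kn^{-2}h^{n-1},\Kn^{-4}h^{n-1}$, all $\le C\Kn^{-4}h^{n-1}$. For $\langle u_j v_k\rangle$ with $j+k>2$ I would distinguish whether a ballistic factor is present: when it is (cases $(1,2),(2,1)$) the product is supported on the cap, so $\langle u_1 v_2\rangle\le\|f\|_{L^\infty}\|v_2\|_{L^\infty}\cdot\frac{|\mathrm{cap}|}{|\mathbb{S}^{n-1}|}\lesssim\Kn^{-2}h^{n-1}$; when it is absent (case $(2,2)$) I use $\langle u_2 v_2\rangle\le\|u_2\|_{L^\infty}\|v_2\|_{L^\infty}\lesssim\Kn^{-4}h^{n-1}$. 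The main point to get right is the bookkeeping of where angular concentration is available: every ballistic factor contributes a gain $h^{(n-1)/2}$ through the cap measure, while every remainder factor costs a $\Kn^{-2}$ and contributes only $h^{(n-1)/2}$ from its own small amplitude, so it is the single term $\langle u_2 v_2\rangle$ with two remainder factors and no concentration that forces the worst constant $\Kn^{-4}$.
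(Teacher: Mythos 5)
Your proposal is correct and takes essentially the same route as the paper's proof: the cap-measure bound $\langle u_1\rangle,\langle v_1\rangle=\mathcal{O}(h^{(n-1)/2})$, the estimate $\|u_2\|_{L^\infty},\|v_2\|_{L^\infty}\leq C\Kn^{-2}h^{(n-1)/2}$ obtained from Theorem~\ref{RTESolutionEstimate} applied to the $u_2$-equation with source $\frac{\sigma_s}{\Kn}\langle u_1\rangle$ and zero incoming data, and the same case-by-case assembly using $\Kn\leq 1$, with $\langle u_2 v_2\rangle$ forcing the worst factor $\Kn^{-4}$. Your handling of the mixed terms via the angular support of $u_1$ is just a rephrasing of the paper's inequality $\langle u_1 v_2\rangle\leq\langle u_1\rangle\,\|v_2\|_{L^{\infty}}$, so no substantive difference.
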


\begin{proof} 
First note that it follows directly from \eqref{BallisticExplicit}, \eqref{eqn:boundary}, and $\sigma>0$,
the angular average of $u_1$ satisfies
	\begin{equation}\label{AverageBallistic}
	\langle u_1 \rangle \leq \mathcal{O}(h^{\frac{n-1}{2}}) \ \hbox{for all }x\in\Omega\,. 
	\end{equation}
Moreover, Theorem \ref{RTESolutionEstimate} and \eqref{equation_ballistic_remainder} tell us that 
	\[
	\|u_2\|_{L^{\infty}(\Omega \times S^{n-1})} \leq  C\Kn^{-2}\left\| \langle u_1 \rangle \right\|_{L^{\infty}(\Omega)}.
	\]
	
Combining with \eqref{AverageBallistic}, we can deduce that
	\begin{equation}\label{u1LinftyEstimate}
	\|u_2\|_{L^{\infty}(\Omega \times S^{n-1})} \leq C\Kn^{-2} \mathcal{O}(h^{\frac{n-1}{2}})\,.
	\end{equation}
	The estimates \eqref{AverageBallistic} and \eqref{u1LinftyEstimate} also hold for $v_1$ and $v_2$, respectively.
	Together these estimates yield all of the bounds of Proposition \ref{ReminderEstimate}.  
	For instance, applying \eqref{AverageBallistic} to $\langle u_1 \rangle$ and \eqref{u1LinftyEstimate} to $\|v_2\|_{L^{\infty}(\Omega \times S^{n-1})}$ we get 
	\[
	\langle u_1 v_2 \rangle \leq \langle u_1 \rangle \|v_2\|_{L^{\infty}(\Omega \times S^{n-1})}\leq \Kn^{-2}\mathcal{O}(h^{n-1})\,.
	\]
	Similarly, we can derive
	\[
	\langle u_2 v_2 \rangle \leq C\Kn^{-4} \mathcal{O}(h^{n-1})\,,
	\]
	where $C$ is a constant independent of $\Kn$ and $h$.
\end{proof}

The main result is a natural consequence of the previous two propositions.
\begin{proof}[Proof of Theorem~\ref{thm:main}]
We first rewrite~\eqref{internal_data_expand} as
\begin{align*}
 \, & H(x) + \frac{\sigma(x)}{\Kn}\langle u_1 v_1\rangle \\
&= \frac{\sigma_s(x)}{\Kn} (\langle u_1\rangle\langle v_1\rangle+\langle u_1\rangle\langle v_2\rangle+\langle u_2\rangle\langle v_1\rangle+\langle u_2\rangle\langle v_2\rangle) \notag\\
&\hskip1cm  -\frac{\sigma(x)}{\Kn}(\langle u_1 v_2\rangle + \langle u_1 v_2\rangle + \langle u_2 v_2\rangle)\,.
\end{align*}
Applying Proposition~\ref{PrincipalEstimate} to the $\langle u_1 v_1\rangle$ term and Proposition~\ref{ReminderEstimate} to everything else, for $\Kn\ll 1$, we get
\[
H(x) + \frac{\sigma(x)}{\Kn}\left(e^{-{\frac{1}{\Kn}}\int_{\ell(x,\theta_0)}\sigma ds}(1 + \Kn^{-1}\mathcal{O}(h^{\alpha}))\right) =  \Kn^{-5}\mathcal{O}(h^{n-1})\,.
\]
Multiplying through by $e^{\int_{\ell(x,\theta_0)}{\frac{\sigma}{\Kn}}ds} \,$, one has
\[
\frac{\sigma(x)}{\Kn} + e^{\int_{\ell(x,\theta_0)}{\frac{\sigma}{\Kn}}ds} H(x) = \frac{1}{\Kn^2}\mathcal{O}(h^{\alpha}) + \frac{1}{\Kn^5}\mathcal{O}(h^{n-1})e^{\frac{d \sup \sigma }{\Kn}}\,.
\]
In the zero limit of $\Kn$, the exponential term dominates, and the equation becomes 
\begin{equation}\label{OldConclusion}
\frac{\sigma(x)}{\Kn} + e^{\int_{\ell(x,\theta_0)}{\frac{\sigma}{\Kn}}ds} H(x) \leq \mathcal{O}(h^{\alpha})e^{\frac{2d \sup \sigma }{\Kn}}\,.
\end{equation}
To estimate the exponential factor in front of $H(x)$, we recall that from \eqref{BallisticExplicit}, $u_1$ has explicit formula:
\[
u_1(x+\tau_+(x,\theta_0)\theta_0, \theta_0) =e^{-{\frac{1}{\Kn}}\int_{\ell(x,\theta_0)}\sigma ds} f(x-\tau_-(x,\theta_0)\theta_0,\theta_0)\,,
\]
and from~\eqref{u1LinftyEstimate} we also have:
\[
|u_2(x+\tau_+(x,\theta_0)\theta_0, \theta_0)| \leq \Kn^{-2} \mathcal{O}(h^{\frac{n-1}{2}})\,.
\]
Considering the definition of the albedo operator:
\[
\mathcal{A}(f)(x+\tau_+(x,\theta_0)\theta_0, \theta_0) = u(x+\tau_+(x,\theta_0)\theta_0, \theta_0) = u_1(x+\tau_+(x,\theta_0)\theta_0, \theta_0) + u_2(x+\tau_+(x,\theta_0)\theta_0, \theta_0)\,,
\]
we finally have
\begin{align}\label{TotalAttenuation}
\frac{\mathcal{A}(f)(x+\tau_+(x,\theta_0)\theta_0, \theta_0)}{f(x-\tau_-(x,\theta_0)\theta_0,\theta_0)} &= e^{-{\frac{1}{\Kn}}\int_{\ell(x,\theta_0)}\sigma ds} + \frac{\mathcal{O}(h^{\frac{n-1}{2}})}{\Kn^{2}f(x-\tau_-(x,\theta_0)\theta_0,\theta_0)}\nonumber\\
& =e^{-{\frac{1}{\Kn}}\int_{\ell(x,\theta_0)}\sigma ds} + \frac{\mathcal{O}(h^{n-1})}{\Kn^{2}}\,,
\end{align}
where we also used the fact that $f|_{\Gamma_-}$ defined in \eqref{eqn:boundary}. 
By substituting this back to~\eqref{OldConclusion}, it immediately suggests:
\[
 \frac{\sigma(x)}{\Kn} + {\color{black}\frac{f(x-\tau_-(x,\theta_0)\theta_0,\theta_0)}{\mathcal{A}(f)(x+\tau_+(x,\theta_0)\theta_0, \theta_0)} }H(x) = \mathcal{O}(h^{\alpha})e^{\frac{2d \sup \sigma }{\Kn}} + \frac{\mathcal{O}(h^{n-1})}{\Kn^{2}}H(x)\,.
\]
Since~\eqref{OldConclusion} also implies that $H(x)$ is of order $e^{\frac{2d \sup \sigma }{\Kn}}$, we conclude
\[
\frac{\sigma(x)}{\Kn} +{\color{black} \frac{f(x-\tau_-(x,\theta_0)\theta_0,\theta_0)}{\mathcal{A}(f)(x+\tau_+(x,\theta_0)\theta_0, \theta_0)} } H(x) =  \mathcal{O}(h^{\alpha}) e^{\frac{3d \sup \sigma}{\Kn}}\,,
\]
Thus, Theorem \ref{thm:main} now follows. 
\end{proof}

\section{Numeric Examples}\label{sec:numeric}
In this section we provide numerical evidence that verifies the analysis above.

As a numerical setup, we let $\Omega = [0,1]^2$, and mesh grids are sampled to resolve small scales in $\Kn$. Since we only concern ourselves with the behavior of the different terms in $H(x)$ in terms of $\Kn$ and $h$, we use the simple media $\sigma = 1$ all over the domain. In particular, the incoming light is concentrated at $(x=0,y=1/2)$ with $h$ indicating the concentration in velocity domain. In Figure~\ref{fig:u1_ave} we plot the velocity average of the ballistic part, $\langle u_1\rangle$, for different $\Kn$ with $h=\frac{2\pi}{25}$ and $h=\frac{14\pi}{25}$, and it can be clearly seen that smaller $h$ narrows down the spreading of $\langle u_1\rangle$ while smaller $\Kn$ gives stronger decay of $\langle u_1\rangle$ in space. Moreover, we also plot $\langle u_2\rangle$ in Figure~\ref{fig:u2_ave}. Similar to the case of $\langle u_1\rangle$, as $h$ increases the function is more spread out, while smaller $\Kn$ leads to the stronger decay.

We also plot $\langle u_1v_1\rangle$ as a function of $\Kn$ and $h$. 
As seen in Figure~\ref{fig:u1v1}, the quantity decays exponentially fast with respect to small $\Kn$ and algebraically fast with respect to small $h$,  which confirms with the result in Proposition~\ref{PrincipalEstimate}. In addition, the remainder term $H(x)-\frac{1}{\Kn}\langle u_1v_1\rangle$ is plotted in Figure~\ref{fig:rest}.

Lastly, the quantitative dependence of error on $\Kn$ and $h$ is plotted in Figure~\ref{fig:error} and the observation confirms with the prediction of our theorem. Specifically, this error is an exponential function of $\frac{1}{\Kn}$ and grows algebraically with respect to $h$, where the error is defined to be the relative error, namely:  
\[
\text{Error}(x) = \frac{\Kn H(x)-\langle u_1v_1\rangle}{\langle u_1v_1\rangle}
\]
and the plot shows the $L^2$ norm over the space $x\in\Omega$.

\begin{figure}[ht]
\includegraphics[height = 0.5\textheight,width = 1.2\textwidth,angle = 90]{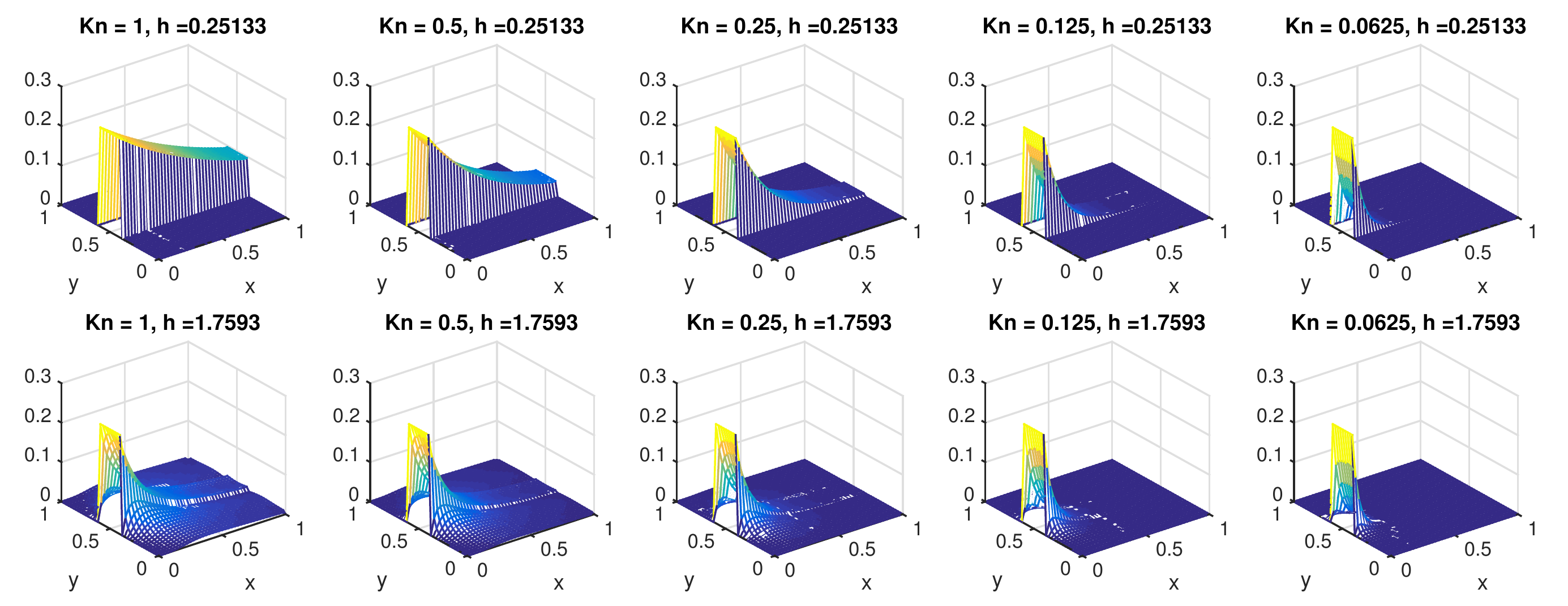}
\caption{The plot show $\langle u_1\rangle$ with different $\Kn$ and $h$. From left to right, $\Kn$ decreases as $2^{-k}$ with $k=0,1,2,3,4$. The two rows are for 
$h$ being $\frac{2\pi}{25}$ and $\frac{14\pi}{25}$ respectively.}\label{fig:u1_ave}
\end{figure}

\begin{figure}[ht]
\includegraphics[height = 0.5\textheight,width = 1.2\textwidth,angle = 90]{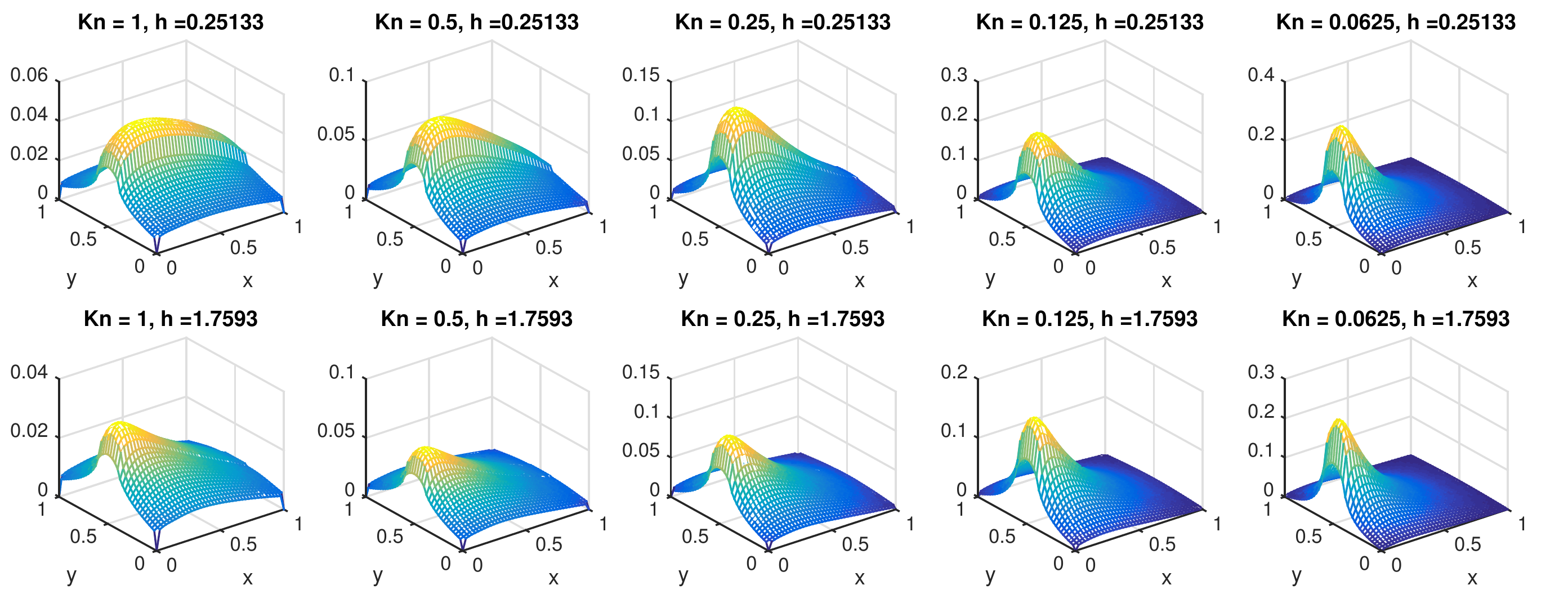}
\caption{The plot show $\langle u_2\rangle$ with different $\Kn$ and $h$. From left to right, $\Kn$ decreases as $2^{-k}$ with $k=0,1,2,3,4$. The two rows are for 
$h$ being $\frac{2\pi}{25}$ and $\frac{14\pi}{25}$ respectively.}\label{fig:u2_ave}
\end{figure}

\begin{figure}[ht]
\includegraphics[height = 0.5\textheight,width = 1.2\textwidth,angle = 90]{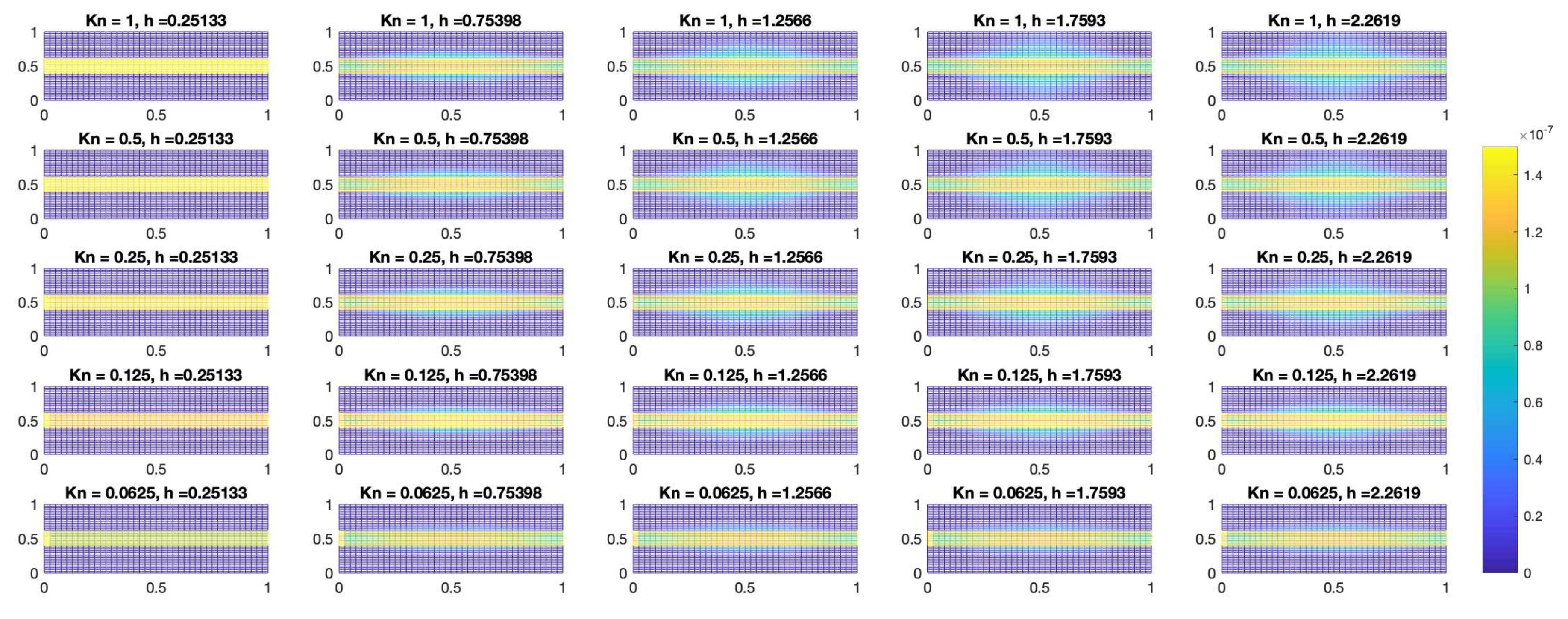}
\caption{As $h$ increases, the intensity of $\langle u_1v_1\rangle$ is more spread out, and as $\Kn$ decreases, the intensity quickly decays and is low in the interior of the domain.}\label{fig:u1v1}
\end{figure}

\begin{figure}[ht]
\includegraphics[height = 0.5\textheight,width = 1.2\textwidth,angle = 90]{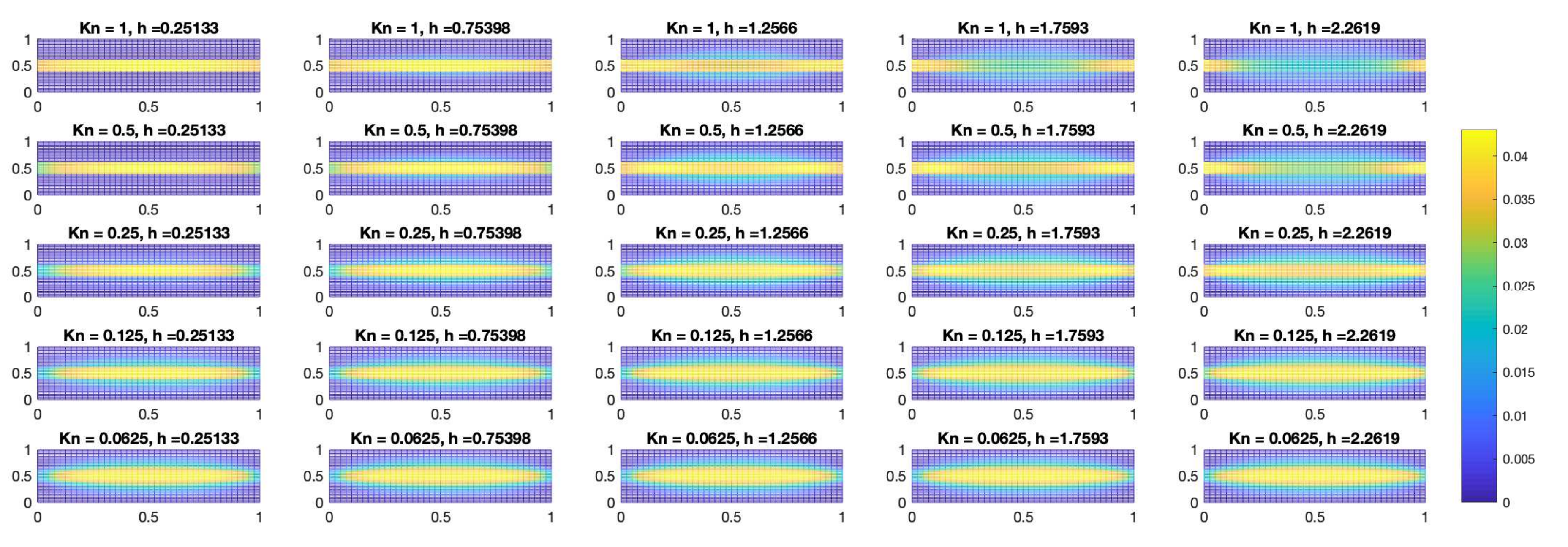}
\caption{The plot shows $H(x)-\langle u_1v_1\rangle$ as a function of $h$ and $\Kn$.}\label{fig:rest}
\end{figure}

\begin{figure}[ht]
\includegraphics[width = 0.45\textwidth]{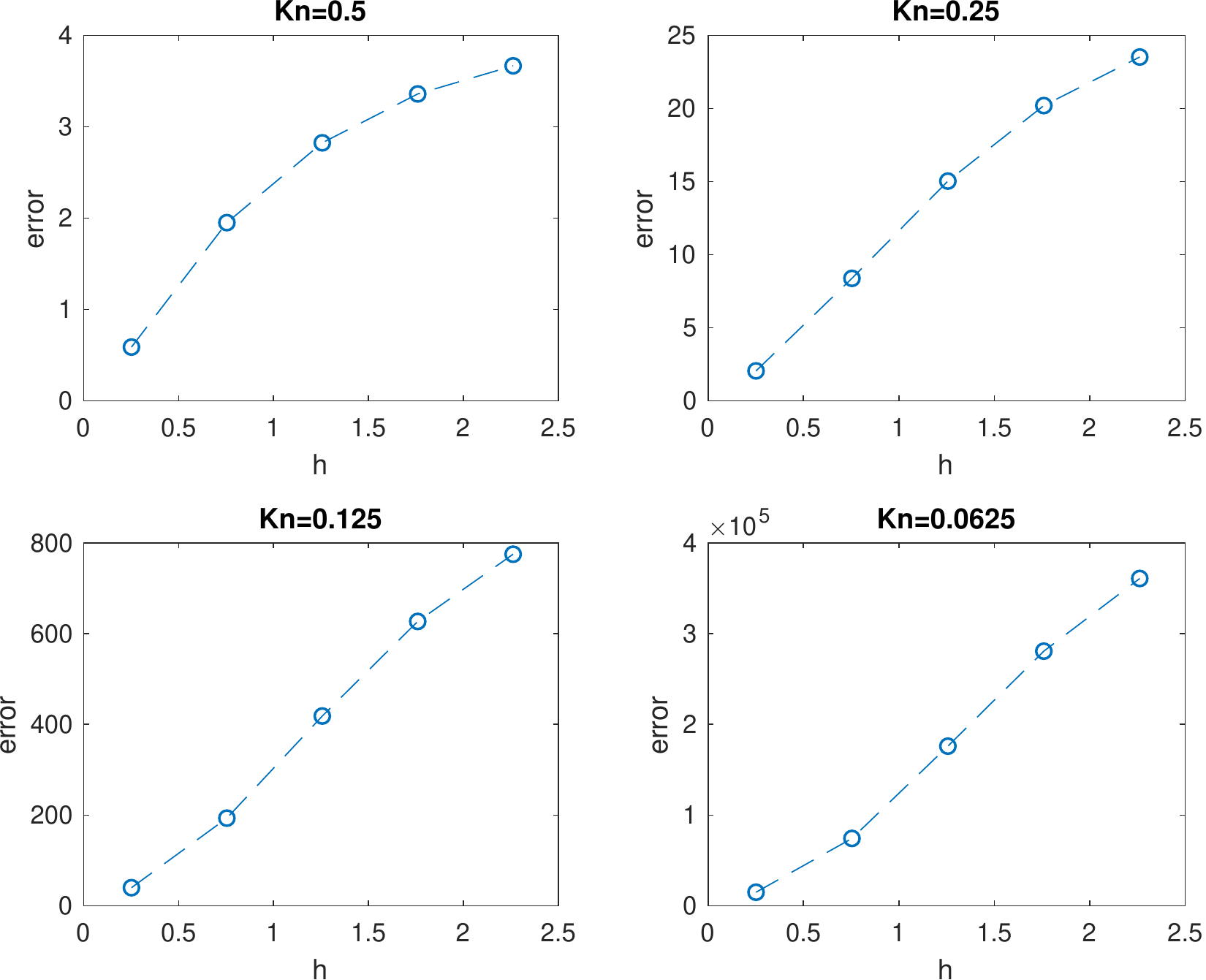}\hspace{0.3cm}
\includegraphics[width = 0.45\textwidth]{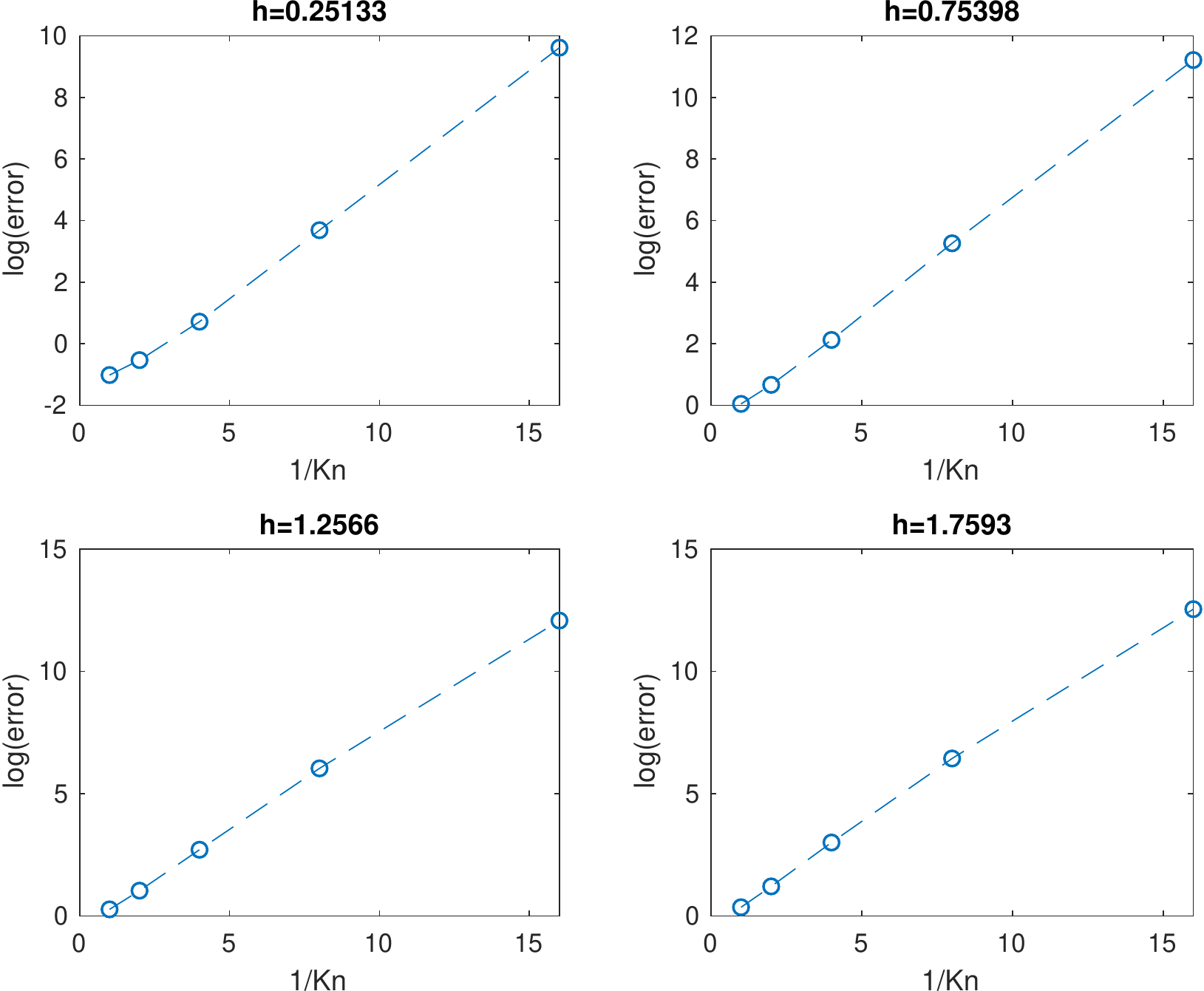}
\caption{These two plots show the error as a function of $\Kn$ and $h$. For every fixed $\Kn$, the error grows almost linearly with respect to $h$, and for every fixed $h$, the logarithmic error grows almost linearly with respect to $\frac{1}{\Kn}$.}\label{fig:error}
\end{figure}

\vspace{1cm}

\bibliographystyle{abbrv}

\bibliography{transbib}

\end{document}